\newtheorem{theo}{\bf Theorem}[section]
\newtheorem{lemma}{\bf Lemma}[section]
\newtheorem{conj}{\bf Conjecture}[section]
\newcommand{\spn}{{\rm span}}
\newcommand{\Z}{{\mathbb Z}}
\newcommand{\R}{{\mathbb R}}
\newcommand{\bea}{\begin{eqnarray*}}
\newcommand{\eea}{\end{eqnarray*}}
\newcommand{\be}{\begin{eqnarray}}
\newcommand{\ee}{\end{eqnarray}}
\newcommand{\vol}{\mbox{vol}\,}
\newcommand{\ve}{\boldsymbol}
\newcommand{\interior}{\mbox{\rm int}\,}
\newcommand{\frob}{\mathrm{F}}
\newcommand{\dfrob}{\mathrm{g}}
\numberwithin{equation}{section}
\begin{document}

\title[LLL for Integer Knapsacks]{LLL-reduction for Integer Knapsacks}
\author{Iskander Aliev}
\address{School of Mathematics and Wales Institute of Mathematical and Computational Sciences, Cardiff University, Senghennydd Road, Cardiff, Wales, UK}
\email{alievi@cf.ac.uk}

\author{Martin Henk}
\address{Fakult\"at f\"ur Mathematik, Otto-von-Guericke
Universit\"at Mag\-deburg, Universit\"atsplatz 2, D-39106 Magdeburg,
Germany} \email{martin.henk@ovgu.de}


\begin{abstract}
 Given a matrix  $A\in \Z^{m\times n}$ satisfying certain
 regularity assumptions, a well-known integer programming problem asks to find an integer point in the associated {\em knapsack polytope}
\bea
P(A,{\ve b})=\{{\ve x}\in \R^n_{\ge 0}: A {\ve x}={\ve b}\}\,
\eea
or determine that no such point exists.
We obtain an LLL-based polynomial time algorithm that solves the problem subject to a constraint on the location of the vector ${\ve b}$.

\end{abstract}

\keywords{Knapsack problem; Frobenius numbers; successive minima; inhomogeneous minimum; distribution of lattices}

\subjclass[2000]{Primary: 90C10, 90C27, 11D07   ; Secondary:  11H06}

\maketitle

\section{Introduction and Statement of Results}

Let $A\in\Z^{m\times n}$, $1\leq m<n$, be an integral
$m\times n$ matrix satisfying
\begin{equation}
\begin{split}
{\rm i)}&\,\, \gcd\left(\det(A_{I_m}) : A_{I_m}\text{ is an $m\times
    m$ minor of }A\right)=1, \\
{\rm ii)}&\,\, \{{\ve x}\in\R^n_{\ge 0}: A\,{\ve x}={\ve 0}\}=\{{\ve 0}\},
\end{split}
\label{assumption}
\end{equation}
where $\gcd(a_1,\dots,a_l)$ denotes the greatest common divisor of
integers $a_i$, $1\leq i\leq l$.
For such a matrix $A$ and a vector  ${\ve b}\in \Z^m$  the   {\em knapsack
  polytope} $P(A,{\ve b})$ is defined as
\bea
P(A,{\ve b})=\{{\ve x}\in \R^n_{\ge 0}: A {\ve x}={\ve b}\}\,.
\label{P}
\eea
Observe that on account of \eqref{assumption} ii), $P(A,{\ve b})$ is
indeed a polytope (or empty).

The paper is concerned with the following
integer programming problem:
\be
\begin{array}{l}
\mbox{Given input}\; (A,{\ve b}),\;\mbox{find an integer point in}\; P(A,{\ve b})\;\\ \mbox{or determine that no such  point exists}\,.
\end{array}
\label{Knapsack}
\ee
The problem (\ref{Knapsack}) is  NP-hard (see e.g. Section 15.6 in Papadimitriou and Steiglitz \cite{PS}).
When $m=1$ we obtain the well-known {\em integer knapsack problem}: given integers $a_j$, $j=1,\ldots,n$, and $b$, find
integers $x_j\ge 0$, $j=1,\ldots,n$, such that $\sum_{j=1}^n a_j x_j=b$ or determine that no such integers exist.

Let us define the set
\begin{equation*}
{\mathcal F}(A)=\{{\ve b}\in\Z^m : P(A,{\ve b})\cap\Z^n\ne\emptyset\}.
\end{equation*}
Thus, the set ${\mathcal F}(A)$ will consist of all possible vectors ${\ve b}$
such that the polytope $P(A,{\ve b})$ contains an integer point.

A set $S\subset\R^m$ will be called a  {\em feasible} set if  $S\cap \Z^m \subset{\mathcal F}(A)$.
Results of Aliev and Henk \cite{AH}, Knight \cite{Knight},
Simpson and Tijdeman \cite{ST} and Pleasants, Ray and Simpson \cite{PRS} show that the set
${\mathcal F}(A)$ can be decomposed into the set of all integer points in a certain  feasible (translated) cone and a complementary set
with complex combinatorial structure.

Note that the case $m=1$ corresponds to the celebrated Frobenius problem and has been extensively studied in the literature. We address this problem below.
When $n=m+1$ Pleasants, Ray and Simpson \cite{PRS} obtained a unique
maximal cone whose interior is feasible.
To the best of the authors knowledge the existence of such a maximal cone in the general case is not known.

The location of a feasible cone is given by the {\em diagonal Frobenius number}
defined as follows.
Let ${\ve v}_1,\ldots,{\ve v}_n\in \Z^m$  be the columns  of the matrix $A$ and let
\bea
C=\{\mu_1{\ve v}_1+\cdots+\mu_n{\ve v}_n: \mu_1,\ldots,\mu_n\ge 0\}\,
\eea
be the cone generated by ${\ve v}_1,\ldots,{\ve v}_n$. Let also ${\ve v}:={\ve v}_1+\ldots+{\ve v}_n$. Following Aliev and Henk \cite{AH}, by the {\em diagonal Frobenius number} $\dfrob=\dfrob(A)$ {\em of} $A$ we understand the minimal $s\ge 0$, such that for all ${\ve b}\in (s{\ve v}+C)\cap\Z^m$ the polytope $P(A,{\ve b})$ contains an integer point.
Thus we have the inclusion
\begin{equation*}
(\dfrob(A){\ve v}+C)\cap\Z^m \subset {\mathcal F}(A)\,,
\label{eq:diag}
\end{equation*}
or, in other words, the translated cone $\dfrob(A){\ve v}+C$ is feasible.

The behavior of $\dfrob(A)$ was investigated in Aliev and Henk \cite{AH}. The authors obtained an optimal up to a constant multiplier upper bound
\be \dfrob(A)\le \frac{(n-m)}{2}(n\det(AA^T))^{1/2}\,\label{upper_bound_for_F}\ee
and estimated the expected value of the diagonal Frobenius number.

It is natural to  expect  that the problem (\ref{Knapsack}) is solvable in polynomial time when the right hand side vector ${\ve b}$ belongs to a feasible cone. For such vectors ${\ve b}$ we a priori know that the knapsack polytope contains at least one integer point.
We would like to propose the following conjecture.
\begin{conj}
The problem (\ref{Knapsack}) is solvable in polynomial time for
all instances $(A,{\ve b})$ with
\bea
{\ve b} \in (\dfrob(A){\ve v}+C)\cap\Z^m\,.
\eea
\label{conjecture}
\end{conj}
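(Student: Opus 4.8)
\textbf{A plan of attack on Conjecture~\ref{conjecture}.} The plan is to reduce the feasibility search to finding a lattice point close to a prescribed target and then to attack the latter with LLL-reduction. Assumption~\eqref{assumption}~i) guarantees that $A{\ve x}={\ve b}$ is solvable over $\Z$, so in polynomial time one computes a point ${\ve x}_0\in\Z^n$ with $A{\ve x}_0={\ve b}$ together with a basis of the rank-$(n-m)$ lattice $\Lambda=\{{\ve z}\in\Z^n:A{\ve z}={\ve 0}\}$. The integer points of $P(A,{\ve b})$ are then exactly $\{{\ve x}_0+{\ve z}:{\ve z}\in\Lambda,\ {\ve x}_0+{\ve z}\ge{\ve 0}\}$. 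Since ${\ve v}=A{\ve 1}$ with ${\ve 1}=(1,\dots,1)^T\in\Z^n$, writing ${\ve b}=\dfrob(A){\ve v}+{\ve c}$ with ${\ve c}=A{\ve\mu}$, ${\ve\mu}\ge{\ve 0}$, the point ${\ve p}:=\dfrob(A){\ve 1}+{\ve\mu}$ lies in $P(A,{\ve b})$ with every coordinate $\ge\dfrob(A)$; more generally, the deeper ${\ve b}$ sits inside $\dfrob(A){\ve v}+C$, the larger one may take $\rho({\ve b})$, the smallest coordinate of ${\ve p}$. Setting ${\ve t}:={\ve p}-{\ve x}_0$, which lies in the linear hull of $\Lambda$, the task reduces to: \emph{find ${\ve z}\in\Lambda$ with $\|{\ve z}-{\ve t}\|_\infty<\rho({\ve b})$}, since any such ${\ve z}$ yields ${\ve x}_0+{\ve z}\in P(A,{\ve b})\cap\Z^n$.

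The algorithmic step is to solve this inhomogeneous problem approximately via LLL: compute an LLL-reduced basis of $\Lambda$ and round ${\ve t}$ to a lattice point ${\ve z}$ by the nearest-plane procedure, which returns ${\ve z}$ with $\|{\ve z}-{\ve t}\|$ within a factor $2^{(n-m)/2}$ of the distance of ${\ve t}$ to $\Lambda$, hence within $2^{(n-m)/2}$ of the covering radius $\mu(\Lambda)$. If one can certify $2^{(n-m)/2}\mu(\Lambda)<\rho({\ve b})$, then ${\ve x}_0+{\ve z}$ is a valid output and the algorithm succeeds; this is the route by which one obtains the statement with a larger, explicitly bounded threshold in place of $\dfrob(A)$, namely by enlarging the cone until the exponential LLL loss is absorbed. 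A second route is to exploit the product structure of $C$: when $\rho({\ve b})$ is large one fixes the bulk of the diagonal solution ${\ve p}$ and passes to a residual knapsack on fewer effective variables, iterating until the remaining dimension is bounded and a Lenstra-type procedure applies — but bounding the number of iterations and the growth of the residual data is delicate.

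I expect the main obstacle to be precisely the discrepancy between $2^{(n-m)/2}\mu(\Lambda)$ and the slack genuinely available at the threshold. By construction $\dfrob(A)$ is the \emph{minimal} shift rendering $\dfrob(A){\ve v}+C$ feasible, so for some ${\ve b}$ at the threshold the polytope $P(A,{\ve b})$ contains an integer point only very near its boundary; then $\rho({\ve b})$ is of the same order as the relevant lattice parameter of $\Lambda$ — consistently with the bound \eqref{upper_bound_for_F} of Aliev and Henk \cite{AH}, since $\sqrt{\det(AA^T)}=\det(\Lambda)$ under \eqref{assumption}~i) — and leaves no exponential room for a generic LLL rounding. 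Overcoming this would require one of: (a) a basis reduction tailored to the knapsack lattices $\ker A\cap\Z^n$ that beats the worst-case $2^{O(n-m)}$ approximation factor on these instances; (b) a sharper structural result — in effect a tight transference inequality relating $\dfrob(A)$ to the successive minima of $\Lambda$ — showing that the knapsack polytope already at the threshold harbours a lattice point within the LLL factor of its centre; or (c) a feasibility certificate not based on rounding, specific to the cone structure. Of these, a tight transference principle for $\dfrob(A)$ seems to me the most plausible route.
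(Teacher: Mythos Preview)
The statement is a \emph{conjecture}; the paper does not prove it. Your plan of attack is, in fact, exactly the algorithm the paper implements to obtain its partial results (Theorems~\ref{polynomial_algorithm} and~\ref{improvement}): compute an integer solution ${\ve u}$ of $A{\ve x}={\ve b}$, locate a deep interior point ${\ve c}\in P(A,{\ve b})$ with all coordinates at least the current threshold, run Babai's nearest-plane procedure on $L_A^\perp$ to round ${\ve u}-{\ve c}$ to a lattice point ${\ve v}$, and output ${\ve z}={\ve u}-{\ve v}$. The only cosmetic difference is that the paper does not pass through the covering radius $\mu(\Lambda)$; it bounds $\|{\ve z}-{\ve c}\|$ directly by $\tfrac{1}{2}(n-m)^{1/2}\max_i\|{\ve b}_i\|$ and then bounds the LLL-reduced basis vectors themselves (Lemma~\ref{approximation_lemma}, Theorem~\ref{main_approximation_theorem}). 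The resulting exponential loss is the same one you identify.

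You correctly diagnose the obstruction: at the threshold $\dfrob(A)$ the available slack $\rho({\ve b})$ is only of order $(\det(AA^T))^{1/2}$ by~\eqref{upper_bound_for_F}, which cannot absorb the $2^{(n-m)/2}$ factor from generic LLL rounding. Of your three suggested routes, the paper itself pursues a version of~(a): Theorem~\ref{main_approximation_theorem} shows that for integer sublattices $L\subset\Z^n$ with large determinant the LLL-reduced basis already satisfies $\|{\ve b}_i\|\le(1+o(1))\sqrt{n}\det(L)$ with no exponential factor, and this is precisely what produces Theorem~\ref{improvement}. Removing simultaneously the exponential factor and the large-determinant hypothesis --- the content of the conjecture --- is left open.
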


This question is closely related to algorithmic problems in Section A.1 of Ram\'{\i}rez Alfons\'{\i}n \cite{Alf}.

The first result of the paper gives an estimate for the location of the desired feasible cone  and can be considered as a step towards proving our conjecture.

\begin{theo}

There exists a polynomial time algorithm which, given $(A, {\ve b})$, where $A$ satisfies (\ref{assumption}), ${\ve b}\in \Z^m$ with
\be{\ve b}\in (2^{(n-m)/2-1}p(m,n)(\det(AA^T))^{1/2}{\ve v}+C)\,\label{desired_cone}\ee
and
\bea p(m,n)=2^{-1/2}(n-m)^{1/2}n^{1/2}\,,\eea
finds an integer point in the polytope $P(A, {\ve b})$.

\label{polynomial_algorithm}
\end{theo}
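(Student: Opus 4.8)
The plan is to reduce the feasibility problem over the translated cone $\dfrob(A)\ve v + C$ to a lattice point problem that LLL can handle, by exploiting the structure of the fibre $P(A,\ve b)\cap\Z^n$. First I would fix a particular integer solution. Choose any $\ve x_0\in\Z^n$ with $A\ve x_0=\ve b$; this exists and can be found in polynomial time because assumption \eqref{assumption} i) guarantees that the columns of $A$ span $\Z^m$ (Smith normal form / extended Euclid). The set of all integer solutions of $A\ve x=\ve b$ is then the coset $\ve x_0+\Lambda$, where $\Lambda=\{\ve y\in\Z^n:A\ve y=\ve 0\}$ is a lattice of rank $n-m$. A basis of $\Lambda$, together with the determinant relation $\det\Lambda=(\det(AA^T))^{1/2}$, is computable in polynomial time. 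So the task becomes: find a lattice point of $\Lambda$ inside the shifted box $\{-\ve x_0+\R^n_{\ge 0}\}$, i.e. find $\ve y\in\Lambda$ with $\ve y\ge -\ve x_0$ coordinatewise.

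Next I would run the LLL algorithm on the basis of $\Lambda$ to obtain a reduced basis $\ve b_1,\dots,\ve b_{n-m}$, and then apply Babai's nearest-plane rounding to the target point $-\ve x_0$ (or rather its orthogonal projection onto $\lin\Lambda$, after first subtracting the component of $\ve x_0$ orthogonal to $\lin\Lambda$, which is harmless since $\ve b\in C$ forces that component to be the ``correct'' one coming from $A\ve x_0=\ve b$). Babai's algorithm returns a lattice point $\ve z\in\Lambda$ whose distance to the target is at most $2^{(n-m)/2-1}$ times the covering radius of $\Lambda$ — more precisely at most $\frac12(\sum\|\hat{\ve b}_i\|^2)^{1/2}\le 2^{(n-m)/2-1}(\det\Lambda)^{1/n-m}\cdot(\text{something})$; the clean bound I want is that the returned point lies within Euclidean distance $2^{(n-m)/2-1}(\sum_{i}\|\ve b_i^*\|^2)^{1/2}$-type quantity, which by the LLL guarantee is controlled by $2^{(n-m)/2-1}\sqrt{n-m}\,(\det\Lambda)^{1/(n-m)}$ in the worst case, but here I really want the {\em inhomogeneous} bound in terms of $\det\Lambda$ directly. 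Writing $\ve x=\ve x_0+\ve z$ we get $A\ve x=\ve b$ and $\|\ve x-(\text{projection of }\ve x_0)\|$ small; the point is that $\ve x$ is within a controlled distance of the affine fibre's ``centre''.

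The key geometric step is then to show that this controlled distance, combined with the hypothesis \eqref{desired_cone} placing $\ve b$ deep inside the cone (at ``depth'' $2^{(n-m)/2-1}p(m,n)(\det(AA^T))^{1/2}$ along the direction $\ve v=\ve v_1+\cdots+\ve v_n$), forces $\ve x\ge\ve 0$. Heuristically: $A(\ve x_0)=\ve b=s\ve v+\ve c$ with $s\ge 2^{(n-m)/2-1}p(m,n)\sqrt{\det(AA^T)}$ and $\ve c\in C$; one can choose the initial solution so that its projection onto $\lin\Lambda$ is $\ve 0$ and $\ve x_0$ itself is the unique point of the fibre orthogonal to $\Lambda$ — then $\ve x_0$ has all coordinates at least $s$ minus something, because the all-ones vector $\ve 1$ satisfies $A\ve 1=\ve v$, so $s\ve 1$ is a (real, nonnegative) solution and $\ve x_0-s\ve 1\in\lin\Lambda$ has norm bounded via \eqref{assumption} ii). Thus every coordinate of $\ve x_0$ is at least $s-\|\ve x_0-s\ve 1\|$, and after adding $\ve z$ (which moves each coordinate by at most the Babai distance $\le 2^{(n-m)/2-1}\sqrt{n-m}(\det\Lambda)^{1/(n-m)}\cdot(\dots)$) we still have nonnegativity provided $s$ exceeds the sum of these two error terms — and the constant $p(m,n)=2^{-1/2}(n-m)^{1/2}n^{1/2}$ together with the $2^{(n-m)/2-1}$ factor is exactly what makes the arithmetic close.

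The main obstacle I expect is the second step's bookkeeping: getting the Euclidean displacement $\|\ve z\|$ (equivalently the distance from $\ve x_0$ to its nearest fibre point reachable by Babai) bounded by precisely $2^{(n-m)/2-1}\sqrt{n-m}(\det(AA^T))^{1/2}$-type expression rather than a weaker bound with an extra $(\det\Lambda)^{1/(n-m)}$ exponent floating around — this requires using the LLL bound on $\|\ve b_i^*\|$ in the {\em product} form $\prod\|\ve b_i^*\|=\det\Lambda$ combined with $\|\ve b_i^*\|^2\le 2^{i-1}\cdots$ to telescope into $\sum\|\ve b_i^*\|^2\le \frac{2^{n-m}-1}{\,?\,}(\det\Lambda)^{2/(n-m)}$, and then converting a bound on Euclidean norm of $\ve z$ into a bound on each individual coordinate $|z_j|\le\|\ve z\|$. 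The other delicate point is ensuring the chosen $\ve x_0$ really does have the near-$s\ve 1$ structure; this uses assumption \eqref{assumption} ii) to bound the ``width'' of $\Lambda$ relative to $\lin\Lambda$. Once these two quantitative pieces are assembled, comparing with $s\ge 2^{(n-m)/2-1}p(m,n)\sqrt{\det(AA^T)}$ closes the argument, and polynomial running time is immediate since every ingredient — Smith form, LLL, Babai rounding — is polynomial in the bit size of $(A,\ve b)$.
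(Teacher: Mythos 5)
Your high-level skeleton (particular integer solution plus the kernel lattice $L_A^\perp$, then LLL and Babai rounding toward a deep point of the fibre, then a coordinatewise comparison) is the same as the paper's, but two of the load-bearing steps are wrong or missing. First, the choice of centre. You propose to round toward the orthogonal projection of the fibre (the minimum-norm real solution), arguing that it is close to $s\ve 1$ because ``$A\ve 1=\ve v$, so $s\ve 1$ is a solution'' — but $A(s\ve 1)=s\ve v\neq\ve b$ unless $\ve b$ lies exactly on the ray, and $\ve x_0-s\ve 1$ is not in $\lin\Lambda$ and has no a priori bound, so assumption \eqref{assumption} ii) gives you nothing here. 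Moreover the minimum-norm solution genuinely fails: for $m=1$, ${\ve a}=(1,M)$ with $M$ large and $b$ just above the threshold, the projection $A^T(AA^T)^{-1}\ve b$ has first coordinate $O(1)$ while the Babai displacement can be of order $M$, so nonnegativity is lost. What the hypothesis \eqref{desired_cone} actually gives you is $\ve b=\sum_i(t+\delta_i)\ve v_i$ with $\delta_i\ge 0$ and $t=2^{(n-m)/2-1}p(m,n)(\det(AA^T))^{1/2}$, i.e.\ the subpolytope $P_t=\{\ve x\in P(A,\ve b): x_i\ge t\}$ is nonempty; the paper finds a vertex of $P_t$ by linear programming (Gr\"otschel--Lov\'asz--Schrijver) and uses that as the centre ${\ve c}$, every coordinate of which is at least $t$. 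That is the step your argument is missing.

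Second, the quantitative lattice bound, which you yourself flag as the main obstacle, cannot be fixed the way you suggest. A bound of the shape $\max_i\|\ve b_i\|\lesssim 2^{(n-m)/2}(\det\Lambda)^{1/(n-m)}$ is false for the later vectors of a reduced basis (take $\Lambda=\Z\times N\Z$: the second minimum is $N=\det\Lambda$, not $N^{1/2}$), so no telescoping of the $\|\hat{\ve b}_i\|$ will produce it. What is true, and exactly what the constant $p(m,n)=2^{-1/2}(n-m)^{1/2}n^{1/2}$ is calibrated for, is a bound \emph{linear} in the determinant: for an integer lattice $L\subset\Z^n$ of rank $k$, $\max_i\|\ve b_i\|\le 2^{(k-1)/2}\sqrt{n}\,\det(L)$. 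The paper proves this (Lemma \ref{approximation_lemma}) by combining the LLL inequality $\|\ve b_i\|\le 2^{(k-1)/2}\max_j\|\ve x_j\|$ for any linearly independent $\ve x_1,\dots,\ve x_k\in L$ with Vaaler's theorem on cube sections and Minkowski's second theorem, which yield independent lattice vectors of sup-norm at most $\det(L)$ — integrality of $L$ (each nonzero vector has sup-norm at least $1$) is essential here. With the LP-centre ${\ve c}$ and this lemma, Babai's guarantee $\|({\ve u}-{\ve c})-{\ve v}\|\le\frac{(n-m)^{1/2}}{2}\max_i\|\ve b_i\|$ gives $\|{\ve z}-{\ve c}\|\le 2^{(n-m)/2-1}p(m,n)(\det(AA^T))^{1/2}\le\min_i c_i$, which is the comparison that closes the proof; as written, your proposal does not reach either of these two inequalities.
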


In view of (\ref{upper_bound_for_F}), the affirmative answer to our conjecture would imply that the factor
$2^{(n-m)/2-1}p(m,n)$ in (\ref{desired_cone}) can be replaced by
$\frac{(n-m)n^{1/2}}{2}$, hence
 the exponent $2^{(n-m)/2-1}$ in (\ref{desired_cone}) might be
 redundant.

Our next result shows that the exponent can be removed  for all matrices $A$ with sufficiently large $\det(AA^T)$. This phenomenon is related to the  bounds on the efficiency of the LLL-algorithm and is a consequence of Theorem \ref{main_approximation_theorem} below.
In order to state the result,
let $\gamma_k$ be the $k$-dimensional Hermite constant for which we
refer to \cite[Definition 2.2.5]{Martinet}. Here we just note that by
a result of Blichfeldt  (see, e.g., Gruber and Lekkerkerker \cite{GrLek})
\bea
\gamma_k\le 2\left(\frac{k+2}{\sigma_k}\right)^{2/k}\,,
\eea
where $\sigma_k$ is the volume of the unit $k$-ball;  thus
$\gamma_k=O(k)$.

\begin{theo}
There exists a polynomial time algorithm which, given $(A, {\ve b})$, where $A$ satisfies (\ref{assumption}), ${\ve b}\in \Z^m$ with
\be{\ve b}\in (p(m,n)(\det(AA^T))^{1/2}{\ve v}+C)\label{desired_cone1}\ee
and
\be
\det(AA^T)> \frac{(n-m)2^{2(n-m-2)}\gamma_{n-m}^{n-m}}{n^2}\,,
\label{lower_det}
\ee
finds an integer point in the polytope $P(A, {\ve b})$.

\label{improvement}
\end{theo}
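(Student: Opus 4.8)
The plan is to deduce this from Theorem~\ref{polynomial_algorithm} together with the sharpened approximation result alluded to as Theorem~\ref{main_approximation_theorem}. The key point is that the exponential factor $2^{(n-m)/2-1}$ in \eqref{desired_cone} comes from the worst-case approximation guarantee of the LLL-algorithm applied to a lattice of rank $n-m$ (the kernel lattice $\Lambda(A)=\{\ve x\in\Z^n: A\ve x=\ve 0\}$, which has rank $n-m$ and determinant $(\det(AA^T))^{1/2}$ by the Cauchy--Binet formula). The LLL basis $\ve b_1,\dots,\ve b_{n-m}$ satisfies $\|\ve b_1\|\le 2^{(n-m-1)/2}\lambda_1(\Lambda(A))$ in general, but whenever the lattice is "large enough" one expects instead that an LLL-reduced basis already realizes, or nearly realizes, the successive minima up to the Hermite-constant factor $\gamma_{n-m}$ rather than the exponential factor. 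Condition \eqref{lower_det} is precisely the threshold on $\det(AA^T)$ that makes this happen, via $\lambda_1(\Lambda(A))\ge \bigl(\det(AA^T)\bigr)^{1/2(n-m)}\gamma_{n-m}^{-1/2}$ being forced to exceed a suitable absolute constant, at which point the integrality/rounding argument from the proof of Theorem~\ref{polynomial_algorithm} can be run with the weaker factor $\gamma_{n-m}^{(n-m)/2}$ absorbed.

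Concretely, first I would recall how Theorem~\ref{polynomial_algorithm} was proved: one computes (in polynomial time) a particular integral solution $\ve x_0$ of $A\ve x=\ve b$, then uses a short vector $\ve b_1$ of $\Lambda(A)$ produced by LLL to translate $\ve x_0$ into the cone of nonnegative points; the hypothesis \eqref{desired_cone} guarantees enough "room", and the room needed is governed by $\|\ve b_1\|$, which in the worst case is bounded by $2^{(n-m)/2-1}$ times the quantity that appears without the exponential factor. I would then invoke Theorem~\ref{main_approximation_theorem} to replace that worst-case bound on $\|\ve b_1\|$ (more precisely on the relevant covering/inhomogeneous quantity) by one carrying only $\gamma_{n-m}$; the inequality \eqref{lower_det}, rewritten as $(\det(AA^T))^{1/2} > \frac{(n-m)^{1/2}2^{n-m-2}\gamma_{n-m}^{(n-m)/2}}{n}$, is exactly what is required for the improved bound to dominate, so that $\ve b\in\bigl(p(m,n)(\det(AA^T))^{1/2}\ve v+C\bigr)$ already supplies the room that \eqref{desired_cone} supplied before. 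The algorithm is unchanged — LLL-reduce $\Lambda(A)$, solve the linear system over $\Z$, translate — so the polynomial running time is inherited.

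The main obstacle, and the place where I would expect to spend the real effort, is establishing the quantitative link between $\det(AA^T)$ being large and the LLL output behaving near-optimally — i.e., proving (or correctly citing) Theorem~\ref{main_approximation_theorem} and checking that the constant $2^{2(n-m-2)}$ in \eqref{lower_det} is exactly the one that drops out. This is a delicate comparison between the LLL approximation factor, the Hermite constant, and the geometry of the kernel lattice, and getting the exponents of $2$ and the factor $(n-m)/n^2$ to match requires carefully tracking how $\gamma_{n-m}$ enters the bound on the inhomogeneous minimum of $\Lambda(A)$ versus how $2^{(n-m)/2-1}$ entered it in Theorem~\ref{polynomial_algorithm}. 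Once that bookkeeping is done, the rest — reusing the feasibility-cone translation argument verbatim and noting nothing about the complexity changes — is routine.
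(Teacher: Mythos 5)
Your overall route is the paper's: keep the algorithm of Theorem~\ref{polynomial_algorithm} unchanged, set $\mu(m,n)=p(m,n)$, and in the final length estimate replace Lemma~\ref{approximation_lemma} by Theorem~\ref{main_approximation_theorem}. That is exactly how the paper argues, so the architecture and the claim that polynomiality is inherited are both right.

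However, the step you postpone as ``delicate bookkeeping'' is in fact the entire (two-line) proof, and the mechanism you sketch for it is not the correct one. Theorem~\ref{main_approximation_theorem} does not trade the factor $2^{(k-1)/2}$ for a factor $\gamma_{n-m}^{(n-m)/2}$: its bound is $\|\ve b_i\|\le\bigl(1+\rho_{n-m}/(\det L)^2\bigr)^{1/2}\sqrt{n}\,\det L$, where the Hermite constant enters only through the threshold $\rho_{n-m}$, not through the bound itself. Condition \eqref{lower_det} says precisely $\rho_{n-m}<\det(AA^T)=(\det L_A^\perp)^2$, hence $\max_i\|\ve b_i\|\le\sqrt{2}\,\sqrt{n}\,(\det(AA^T))^{1/2}$, and inserting this into the Babai estimate \eqref{zc} gives $\|\ve z-\ve c\|\le\frac{(n-m)^{1/2}}{2}\sqrt{2}\,\sqrt{n}\,(\det(AA^T))^{1/2}=p(m,n)(\det(AA^T))^{1/2}$, which is exactly the inscribed-ball radius that \eqref{desired_cone1} supplies via Lemma~\ref{technical_lemma}. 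If one instead carried a $\gamma_{n-m}^{(n-m)/2}$ factor in the length bound, as your sketch suggests, the conclusion with only $p(m,n)$ in \eqref{desired_cone1} would not follow. Two smaller corrections: your inequality $\lambda_1\ge(\det(AA^T))^{1/(2(n-m))}\gamma_{n-m}^{-1/2}$ is backwards --- Minkowski--Hermite gives an \emph{upper} bound on $\lambda_1$, and no such lower bound is needed (for integer lattices one only ever uses $\lambda_1\ge 1$); and the correction step is Babai's nearest-plane procedure run with the whole LLL-reduced basis against the target $\ve u-\ve c$ (the error is controlled by $\bigl(\sum_i\|\ve b_i\|^2\bigr)^{1/2}/2$, hence by $\max_i\|\ve b_i\|$), not a translation by the single short vector $\ve b_1$.
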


Thus, if the dimension $n$ is concerned,  Theorem \ref{polynomial_algorithm} gives an exponential bound in $n$ for
the location of the desired feasible cone, the affirmative answer to Conjecture \ref{conjecture} would imply the bound  of order $n^{3/2}$ and for large determinants $\det(AA^T)$ we obtained the
bound of order $n$ in Theorem \ref{improvement}.
In view of the size of $\gamma_k$, the lower bound for $\det(AA^T)$ in (\ref{lower_det})
has order $n^{-1} 2^{n\log n+2n}$.

We would also like to mention an interesting consequence of Theorems  \ref{polynomial_algorithm} and
\ref{improvement}. The proof of Lemma 1.1 in Aliev and Henk \cite{AH} immediately implies that for any integer vector ${\ve w}$ in the interior $\interior C$ of the cone $C$
we have
\bea
\left(\frac{\det(AA^T)}{n-m+1}\right)^{1/2}{\ve w}\in ({\ve v}+C)\,.
\eea
It follows then from Theorem  \ref{polynomial_algorithm} that for every integer vector ${\ve b}\in \interior C$
one can find in polynomial time an integer point in the polytope $P(A,\gamma{\ve b})$
for any integer vector $\gamma{\ve b}$ with
\bea \gamma> \frac{2^{(n-m)/2-1}p(m,n)}{n-m+1}\det(AA^T)\,.\eea
Moreover, if we assume (\ref{lower_det}) to hold, then by Theorem \ref{improvement} we can remove the exponential multiplier $2^{(n-m)/2-1}$ from the latter inequality.

Let us now consider the special case $m=1$.
Then $A={\ve a}^T$ with ${\ve a}=(a_1,
a_2, \ldots, a_n)^T\in\Z^n$ and  \eqref{assumption} i) says that
 $\gcd({\ve a}):=\gcd(a_1, a_2,
\ldots, a_n)=1$. Due to the second assumption \eqref{assumption} ii)
we may assume that all entries of ${\ve a}$ are positive. The largest integral value $b$ such that for $A={\ve a}^T$ and ${\ve b}=(b)$ the polytope $P(A,{\ve b})$ contains no integer point is called the {\em Frobenius number} of ${\ve a}$\,,
denoted by $\frob({\ve a})$.
Frobenius numbers naturally appear in the analysis of integer
programming algorithms (see, e.g.,  Aardal and Lenstra
\cite{Aardal_Lenstra}, Hansen and Ryan \cite{Hansen_Ryan}, and Lee,
Onn and Weismantel \cite{Lee_Onn_Weismantel}). The general problem of
finding $\frob({\ve a})$ has been traditionally referred to as the
{\em Frobenius problem}.  This problem is NP-hard (Ram\'{\i}rez
Alfons\'{\i}n \cite{Alf1, Alf}) and integer programming techniques are known to be an effective tool for investigating behavior of the Frobenius numbers, see e.g. Kannan \cite{Kannan}, Eisenbrand and Shmonin \cite{ES} and Beihoffer et al \cite{BHNW}.

Thus, when $m=1$ the answer for the feasibility problem
\be
\begin{array}{l}
\mbox{Given input}\; (A,{\ve b}),\;\mbox{does the polytope}\; P(A,{\ve b})\;\\ \mbox{contain an integer point?}\,
\end{array}
\label{F_Knapsack}
\ee
is affirmative for all instances $({\ve a}^T, b)$ with $b>\frob({\ve a})$. Therefore, it is natural to expect that for $m=1$ the problem (\ref{Knapsack}) can be solved in polynomial time when $b> c$, for some function $c=c({\ve a})$.
 To the best of our knowledge, this conjecture with $c=\frob({\ve a})$ was first stated by Ram\'{\i}rez Alfons\'{\i}n (for related algorithmic questions see Section A.1 in \cite{Alf}).
Note that if the answer to the latter conjecture is affirmative,
then the factor
$2^{(n-1)/2-1}p(1,n)$ in (\ref{desired_cone}) can be replaced by an absolute constant.

Let  $||\cdot||$ denote the Euclidean norm.
In the case $m=1$, Theorems \ref{polynomial_algorithm} and \ref{improvement} deal with input instances $({\ve a}^T, b)$, satisfying the inequalities
$b>2^{(n-1)/2-1}p(1,n)||{\ve a}||\sum_{i=1}^n a_i$ and $b>p(1,n)||{\ve a}||\sum_{i=1}^n a_i$, respectively.
However, in this important special case, one can use slightly refined lower bounds for $b$. The bounds naturally follow from
the geometric approach to the Frobenius problem developed in Kannan \cite{Kannan} and are closely related to
the upper bound obtained in Fukshansky and Robins \cite{Lenny}.

Let ${\ve a}[i]=(a_1,\ldots,a_{i-1},a_{i+1},\ldots,a_N)$. For $m=1$ we obtain the following refinement of Theorems \ref{polynomial_algorithm} and \ref{improvement}.

\begin{theo} Let $\delta>0$. Then the conditions (\ref{desired_cone}) and (\ref{desired_cone1}) in the statements of Theorems \ref{polynomial_algorithm}
and \ref{improvement} can be replaced by
\be b\ge 2^{(n-1)/2-1}(1+\delta)p(1,n)\sum_{i=1}^n ||{\ve a}[i]||a_i\,\label{desired_cone3}\ee
and
\be b\ge (1+\delta)p(1,n)\sum_{i=1}^n ||{\ve a}[i]||a_i\,,\label{desired_cone4}\ee
respectively.
\label{special_polynomial_algorithm}
\end{theo}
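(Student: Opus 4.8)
The plan is to run the same LLL-rounding scheme that underlies Theorems~\ref{polynomial_algorithm} and~\ref{improvement}, but to feed it the exact geometry of the knapsack simplex in place of a crude enclosing ball. Write $A=\ve{a}^T$, put $d=n-1$ and let $\Lambda=\{\ve{x}\in\Z^n:\langle\ve{a},\ve{x}\rangle=0\}$; by \eqref{assumption}\,i) this is a lattice of rank $d$ with $\det\Lambda=(\det(AA^T))^{1/2}=||\ve{a}||$, contained in the hyperplane $H=\ve{a}^{\perp}$. Using \eqref{assumption}\,i) once more, one computes in polynomial time, e.g.\ from a Hermite normal form of $\ve{a}^T$, a particular integer solution $\ve{x}_0$ of $\langle\ve{a},\ve{x}_0\rangle=b$, so that the integer points of $P(\ve{a}^T,b)$ are exactly $(\ve{x}_0+\Lambda)\cap\R^n_{\ge 0}$; equivalently, $\ve{x}_0+\Lambda$ is a coset of $\Lambda$ inside the affine hyperplane $H+\ve{x}_0$, and we seek one of its points lying in the simplex $P(\ve{a}^T,b)\subset H+\ve{x}_0$.

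For the rounding target I take the incentre of the simplex $P(\ve{a}^T,b)$, which has vertices $(b/a_i)\,\ve{e}_i$; a short computation with the facet areas --- the facet of $P(\ve{a}^T,1)$ lying in $\{x_i=0\}$ has $(n-2)$-volume proportional, with an $i$-independent constant, to $||\ve{a}[i]||\,a_i$ --- identifies the incentre as $\ve{c}=b\bigl(\sum_j||\ve{a}[j]||\,a_j\bigr)^{-1}\sum_i||\ve{a}[i]||\,\ve{e}_i$ and shows that the ball inscribed in $P(\ve{a}^T,b)$ inside $H+\ve{c}$ is centred at $\ve{c}$ with radius $r_b=b\,||\ve{a}||\bigl(\sum_i||\ve{a}[i]||\,a_i\bigr)^{-1}$. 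This is precisely the quantity carrying the Frobenius-number bound of Fukshansky and Robins \cite{Lenny} and appearing in Kannan's geometric approach \cite{Kannan}, and it is the origin of the refinement: $r_b$ exceeds the inradius about the naive centre $(b/\sum_i a_i)\,\ve{1}$, whose denominator is the larger quantity $\sum_i a_i\,\max_j||\ve{a}[j]||$. Since $\ve{c}$ has irrational coordinates in general, the algorithm instead uses a rational point $\tilde{\ve{c}}\in H+\ve{x}_0$ approximating $\ve{c}$ closely enough that a ball about $\tilde{\ve{c}}$ of radius slightly less than $r_b$ still lies in $P(\ve{a}^T,b)$; the slack $(1+\delta)$ in \eqref{desired_cone3}--\eqref{desired_cone4} is what pays for this approximation.

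Next, compute an LLL-reduced basis $\ve{b}_1,\dots,\ve{b}_d$ of $\Lambda$ and round $\tilde{\ve{c}}$ along it, obtaining $\ve{z}\in\ve{x}_0+\Lambda$ with $||\ve{z}-\tilde{\ve{c}}||\le\tfrac12\sum_{j=1}^d||\ve{b}_j||$. To control the right-hand side I use, after sorting $a_1\le\cdots\le a_n$, the explicit vectors $\ve{w}_i=a_n\ve{e}_i-a_i\ve{e}_n\in\Lambda$, $1\le i\le n-1$, which are linearly independent with $||\ve{w}_i||^2=a_i^2+a_n^2$; hence
\[
\lambda_j(\Lambda)\le (a_j^2+a_n^2)^{1/2}\quad(1\le j\le d),\qquad \sum_{j=1}^{d}\lambda_j(\Lambda)^2\le ||\ve{a}||^2+(n-2)a_n^2\le (n-1)||\ve{a}||^2 .
\]
Inserting these successive-minima bounds into the LLL estimates of Theorem~\ref{main_approximation_theorem} yields $\tfrac12\sum_j||\ve{b}_j||\le 2^{(n-1)/2-1}p(1,n)||\ve{a}||$ unconditionally, and $\tfrac12\sum_j||\ve{b}_j||\le p(1,n)||\ve{a}||$ once $\det(AA^T)=||\ve{a}||^2$ exceeds the threshold \eqref{lower_det} with $m=1$, where the $\gamma_{n-1}$-based bound of Theorem~\ref{main_approximation_theorem} removes the factor $2^{(n-1)/2-1}$. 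In either case the hypothesis \eqref{desired_cone3} (resp.\ \eqref{desired_cone4}) forces $r_b>||\ve{z}-\tilde{\ve{c}}||$, so $\ve{z}$ lies in the inscribed ball and hence in $P(\ve{a}^T,b)\cap\Z^n$; outputting $\ve{z}$ solves the problem. Standard size bounds on Hermite normal forms and on LLL-reduced bases keep every intermediate quantity of polynomial bit length, so the procedure runs in polynomial time.

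The crux is the third step: LLL's approximation guarantee is phrased for Euclidean balls, whereas $P(\ve{a}^T,b)$ is a highly skew simplex, so feasibility must be routed through the Euclidean inradius $r_b$ at the incentre, and to make the constant in \eqref{desired_cone3}--\eqref{desired_cone4} come out sharply one cannot afford generic lattice bounds but must exploit the concrete short kernel vectors $\ve{w}_i$. The remaining work is bookkeeping: approximating the irrational incentre, which the cushion $(1+\delta)$ absorbs, and carrying the $\gamma_{n-1}$-dependence through when passing to the large-determinant regime of Theorem~\ref{improvement}.
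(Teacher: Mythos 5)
Your overall route is the same as the paper's: compute the incentre of the simplex $P(\ve a^T,b)$ with inradius $b\,\|\ve a\|/\sum_i\|\ve a[i]\|a_i$, approximate it rationally (the $(1+\delta)$ cushion pays for this), and then Babai-round onto the coset $\ve x_0+\Lambda$ using an LLL-reduced basis of the kernel lattice; your use of the explicit kernel vectors $\ve w_i=a_n\ve e_i-a_i\ve e_n$ together with $\|\ve b_j\|^2\le 2^{n-2}\lambda_j^2$ (inequality \eqref{basis_via_labbda}) is a legitimate alternative to the paper's Vaaler-based Lemma \ref{approximation_lemma} and does deliver the unconditional bound needed for \eqref{desired_cone3}.

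However, the conditional case \eqref{desired_cone4} has a genuine gap as written. You bound the rounding error by $\tfrac12\sum_{j}\|\ve b_j\|$ and claim this is at most $p(1,n)\|\ve a\|$ once \eqref{lower_det} holds, "inserting the successive-minima bounds into Theorem \ref{main_approximation_theorem}". But the estimate \eqref{polynomial_bound_LLL} of Theorem \ref{main_approximation_theorem} does not take successive minima as input at all; under \eqref{lower_det} it gives only $\|\ve b_j\|\le \sqrt{2}\,\sqrt{n}\,\|\ve a\|$ for each $j$, so your $\ell_1$-type Babai bound yields
\[
\tfrac12\sum_{j=1}^{n-1}\|\ve b_j\|\ \le\ \tfrac{n-1}{\sqrt 2}\,\sqrt{n}\,\|\ve a\|,
\]
which exceeds $p(1,n)\|\ve a\|=\tfrac{\sqrt{n-1}}{\sqrt2}\sqrt n\,\|\ve a\|$ by a factor $\sqrt{n-1}$; the inequality you assert is false for $n\ge 3$, and your short vectors $\ve w_i$ cannot repair it, since they play no role in \eqref{polynomial_bound_LLL}. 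The fix is to use the nearest-plane guarantee in its quadratic form, as the paper does in \eqref{Schnorr_bound}--\eqref{zc}: $\|\ve z-\ve c\|\le\tfrac12\bigl(\sum_j\|\ve b_j\|^2\bigr)^{1/2}\le\tfrac{\sqrt{n-1}}{2}\max_j\|\ve b_j\|$, which combined with Theorem \ref{main_approximation_theorem} gives exactly $p(1,n)\|\ve a\|$ under \eqref{lower_det} and closes the argument for \eqref{desired_cone4}. (With that correction your first case also simplifies, since the same $\ell_2$ bound plus either Lemma \ref{approximation_lemma} or your $\lambda_j$ estimates gives \eqref{desired_cone3} directly.)
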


The proofs of Theorems \ref{polynomial_algorithm} and \ref{improvement} are based on the classical Babai's nearest point algorithm \cite{Babaika}. The algorithm is searching for a nearby lattice point and is built on the LLL lattice basis reduction (see Section \ref{GSSM}).  The key ingredient of the proof of Theorem \ref{improvement} is the following result.

\begin{theo}
Let $\rho_k=\frac{k2^{2(k-2)\gamma_k^k}}{n^2}$. If $L\subset\Z^n$ is a $k$-dimensional lattice and
${\ve b}_1, {\ve b}_2, \ldots, {\ve b}_k$ is an LLL--reduced
basis of $L$,
then
\be
||{\ve b}_i||\le  \left(1+\frac{\rho_k}{(\det(L))^2}\right )^{1/2}\sqrt{n}\det(L)\,,\;\; i=1,\ldots,k\,.
\label{polynomial_bound_LLL}
\ee
\label{main_approximation_theorem}
\end{theo}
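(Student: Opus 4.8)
The plan is to bound the norms of the LLL-reduced basis vectors $\ve b_i$ by combining two standard ingredients: the defining inequalities of LLL-reduction, which control how much $\|\ve b_i\|$ can exceed the successive minima of $L$, and a transference-type bound on the last successive minimum $\lambda_k(L)$ in terms of $\det(L)$ coming from the Hermite constant. First I would recall that for an LLL-reduced basis one has $\|\ve b_i\|^2 \le 2^{k-1}\lambda_i(L)^2$ for every $i$, and in particular $\|\ve b_i\|\le 2^{(k-1)/2}\lambda_k(L)$ since the successive minima are nondecreasing. So the whole problem reduces to an upper bound on $\lambda_k(L)$.

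Next I would estimate $\lambda_k(L)$. By Minkowski's second theorem together with the definition of the Hermite constant, the product $\lambda_1(L)\cdots\lambda_k(L)$ is at most $\gamma_k^{k/2}\det(L)$, so $\lambda_k(L)^k \le \gamma_k^{k/2}\det(L)/(\lambda_1(L)\cdots\lambda_{k-1}(L))$. Since $L\subset\Z^n$, every nonzero vector of $L$ has norm at least $1$, hence $\lambda_1(L),\dots,\lambda_{k-1}(L)\ge 1$, giving $\lambda_k(L) \le \gamma_k^{1/2}(\det(L))^{1/k}$. Feeding this into the LLL bound yields $\|\ve b_i\| \le 2^{(k-1)/2}\gamma_k^{1/2}(\det(L))^{1/k}$. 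The remaining task is purely algebraic: to show that this quantity is at most $\bigl(1+\rho_k/(\det L)^2\bigr)^{1/2}\sqrt{n}\,\det(L)$ with $\rho_k = k\,2^{2(k-2)}\gamma_k^k/n^2$. Writing $D=\det(L)\ge 1$ and $k\le n$, one checks that $2^{k-1}\gamma_k\,D^{2/k} \le n D^2 + \rho_k = n D^2 + k\,2^{2(k-2)}\gamma_k^k/n^2$; since $D^{2/k}\le D^2$ trivially and $2^{k-1}\gamma_k \le n$ fails in general, the genuine content is to split into the case $D$ small (where the additive term $\rho_k$ dominates, using $D^{2/k}\le$ a bounded power and absorbing everything into $\rho_k$) and $D$ large (where $nD^2$ dominates). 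A short case analysis balancing the crossover point $D^{2}\sim 2^{k-1}\gamma_k/n$ closes this.

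The main obstacle I anticipate is making the final elementary inequality clean across all admissible ranges of $D$, $k$ and $n$ simultaneously — in particular verifying that the constant $\rho_k$ as defined is exactly large enough, and that the exponent manipulation $D^{2/k}$ versus $D^2$ does not lose a factor that spoils the bound for intermediate $D$. This is not deep but requires care; everything else (the $2^{(k-1)/2}$ LLL inequality and the Hermite/Minkowski estimate for $\lambda_k$) is classical and can be cited directly from \cite{Martinet} and standard references on the LLL algorithm.
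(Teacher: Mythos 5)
Your reduction of the problem to an upper bound on $\lambda_k$ breaks at its key step, and the route itself cannot be repaired. From Minkowski's second theorem in the form $\lambda_1\cdots\lambda_k\le\gamma_k^{k/2}\det(L)$ together with $\lambda_1,\dots,\lambda_{k-1}\ge 1$ you can only conclude $\lambda_k\le\gamma_k^{k/2}\det(L)$; the inequality $\lambda_k\le\gamma_k^{1/2}(\det(L))^{1/k}$ that you feed into the LLL bound is Hermite's inequality for the \emph{first} minimum and is false for the last one. A counterexample is $L=\Z(1,0)\oplus\Z(0,N)\subset\Z^2$: here $\det(L)=N$ and $\lambda_2=N$, far larger than $\gamma_2^{1/2}N^{1/2}$. (The bound that is actually true, and is what the paper establishes in Lemma \ref{lambda_det} via Vaaler's theorem on cube sections, is $\lambda_k\le\sqrt{n}\det(L)$.)

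Moreover, even with a correct estimate for $\lambda_k$, the opening move $\|{\ve b}_i\|\le 2^{(k-1)/2}\lambda_k$ is already too lossy to yield \eqref{polynomial_bound_LLL}: the intermediate quantity $2^{(k-1)/2}\lambda_k$ can exceed the right-hand side of \eqref{polynomial_bound_LLL}. For $L=\Z{\ve e}_1\oplus\Z{\ve e}_2\oplus\Z N{\ve e}_3\subset\Z^3$ one has $\lambda_3=\det(L)=N$, so $2^{(k-1)/2}\lambda_3=2N$, while the claimed bound tends to $\sqrt{3}\,N$ as $N\to\infty$; hence no case analysis on $\det(L)$ in your final algebraic step can close the argument. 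This is exactly the content of the theorem: for large determinants the exponential factor must disappear, which cannot be seen by comparing each $\|{\ve b}_i\|$ with $\lambda_k$ alone. The paper instead argues by contradiction, coupling two different minima with the size-reduction condition: if $\|{\ve b}_k\|\ge(1+c_k)^{1/2}\sqrt{n}\det(L)$, then since $\|\hat{\ve b}_k\|\le\lambda_k\le\sqrt{n}\det(L)$ the term $\tfrac14\sum_{j<k}\|\hat{\ve b}_j\|^2$ must be at least $c_k n(\det(L))^2$, which produces an index $i<k$ with $\lambda_i^2\ge 4c_k2^{1-k}\tfrac{n}{k}(\det(L))^2$, while also $\lambda_k^2\ge 2^{1-k}\|{\ve b}_k\|^2\ge 2^{1-k}n(\det(L))^2$; since the remaining minima are at least $1$ (because $L\subset\Z^n$), the ball form of Minkowski's theorem, $\prod_j\lambda_j\le\gamma_k^{k/2}\det(L)$, forces $c_k\le\rho_k/(\det(L))^2$. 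Some interaction of this kind between the reduced basis, at least two successive minima, and the determinant is indispensable; a bound routed through $\lambda_k$ alone cannot prove the statement.
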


Note that the classical bounds for the lengths of the vectors in an LLL-reduced basis
imply for all $1\le i \le k$ the estimates
\bea
||{\ve b}_i||\le  2^{\frac{k-1}{2}}n^{1/2}\det(L)\,,
\eea
see Lemma \ref{approximation_lemma} below. In (\ref{polynomial_bound_LLL})
we manage to remove the exponential multiplier $2^{(k-1)/2}$ for integer lattices with sufficiently large
determinant.

\section{Integer Knapsacks and Geometry of Numbers}
\label{FL}

Our approach to the problem  is based on Geometry of Numbers for which
we refer to the books \cite{Cassels, peterbible, GrLek}.

By a {\em lattice} we will understand a discrete submodule $L$ of a
finite-dimen\-sional Euclidean space. Here we are mainly interested in
primitive lattices $L\subset \Z^n$, where such a lattice is called
{\em primitive} if $L=\spn_{\R}(L)\cap \Z^n$.

Recall that the Frobenius number $\frob({\ve a})$ is defined only for
integer vectors ${\ve a}=(a_1, a_2, \ldots,a_n)$ with $\gcd({\ve
  a})=1$. This is equivalent to the statement that the
$1$-dimensional lattice $L=\Z\,{\ve a}$, generated by ${\ve a}$ is
primitive. This generalizes easily to  an $m$-dimensional lattice
$L\subset\Z^n$ generated by ${\ve a}_1,\ldots, {\ve a}_m\in\Z^n$. Here the criterion
is that $L$ is primitive if and only if the greatest common divisor of
all $m\times m$-minors is 1. This is an immediate consequence of
Cassels \cite[Lemma 2, Chapter1]{Cassels} or see Schrijver
\cite[Corollary 4.1c]{ASch}.

Hence, by our assumption \eqref{assumption} i), the rows of the matrix
$A$ generate a primitive lattice $L_A$. The determinant of an
$m$-dimensional lattice is the $m$-dimensional volume of the
parallelepiped spanned by the vectors of a basis. Thus in our setting
we have
\begin{equation*}
 \det(L_A) = \sqrt{\det(A\,A^T)}.
\end{equation*}

Now let $A\in\Z^{m\times n}$ be a matrix satisfying the
assumptions \eqref{assumption}. By $V_A$ we will denote the $m$-dimensional subspace of
$\R^n$ spanned by the rows of $A$. The orthogonal complement of $V_A$ in $\R^n$ will be denoted as $V^\bot_A$,
so that
\begin{equation*}
 V_A^\perp=\{{\ve x}\in\R^n : A\,{\ve x}={\ve 0}\}\,.
\end{equation*}
Furthermore, we will use the notation
\bea
L_A^\bot=V_A^\bot\cap\Z^n
\eea
for the integer sublattice contained in $V_A^\bot$.
Observe that (see e. g. \cite{PM})
\begin{equation}
        \det(L^\perp_A)=\det(L_A) = \sqrt{\det(A\,A^T)}. 
\label{eq:det}
\end{equation}

 For a
$k$-dimensional lattice $L$ and an $0$-symmetric convex body $K\subset
\spn_\R L$ the $i$th-successive minimum of $K$ with respect to $L$ is
defined as
\begin{equation*}
 \lambda_i(K,L)=\min\{\lambda>0 : \dim(\lambda\,K\cap L)\geq i\},\quad
 1\leq i\leq k,
\end{equation*}
i.e., it is the smallest factor such that $\lambda\,K$ contains at least $i$
linearly independent  lattice points of $L$.

The Minkowski's celebrated theorem on successive
minima states (cf.~\cite[Theorem 23.1]{peterbible})
\begin{equation}
 \frac{2^k}{k!}\det(L)\leq \vol(K)\,\prod_{i=1}^k\lambda_i(K,L)\leq
 2^k\det(L),
\label{eq:second_minkowski}
\end{equation}
where $\vol(K)$ denotes the volume of $K$.

Let $B$ be the unit ball in $\spn_\R L$.
In the important special case $K=B$ the Minkowski's theorem on successive
minima can be improved (cf.~\cite[\S 18.4, Theorem 3]{GrLek}) to
\begin{equation}
 \det(L)\leq \prod_{i=1}^k \lambda_i(B,L)\leq
\gamma_k^{k/2} \det(L)\,.
\label{eq:second_minkowski_balls}
\end{equation}

\section{Auxiliary results}
\label{GSSM}
\newcommand{\gb}{\hat{{\ve b}}}
\newcommand{\vb}{\ve b}

First, we will prove several lemmas that show a relation between the LLL reduction and successive minima.

For a basis ${\ve b}_1, {\ve b}_2, \ldots, {\ve b}_k$ of a lattice $L$ in $\R^n$ we denote by
$\hat{{\ve b}}_1, \hat{{\ve b}}_2, \ldots, \hat{{\ve b}}_k$ its Gram-Schmidt orthogonalization and by
$\mu_{i,j}$ the corresponding Gram-Schmidt coefficients, that is
\bea
\hat{{\ve b}}_1={\ve b}_1\,,\;\;\hat{{\ve b}}_i={\ve b}_i-\sum_{j=1}^{i-1}\mu_{ij}\hat{{\ve b}}_i\,,\;\;2\le i \le k\,,
\eea
and
\bea
\mu_{ij}= \frac{\langle {\ve b}_i, \hat{{\ve b}}_j\rangle}{||\hat{{\ve b}}_j||^2}\,.
\eea
Put  $\lambda_i=\lambda(B, L)$, where $B$ is the unit ball in $\spn_\R L$. Let us recall the following
technical observation.

\begin{lemma}
We have
\bea
\lambda_i \ge \min_{j=i,i+1,\ldots,k} ||\hat{{\ve b}}_j||\,,\;\; i=1,2,\ldots, k\,.
\eea
\label{lambda_via_min}
\end{lemma}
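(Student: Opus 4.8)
The plan is to exploit the standard properties of Gram--Schmidt orthogonalization together with the variational characterization of the successive minima $\lambda_i = \lambda_i(B,L)$. First I would recall that if $\hat{\ve b}_1,\ldots,\hat{\ve b}_k$ is the Gram--Schmidt orthogonalization of the basis ${\ve b}_1,\ldots,{\ve b}_k$, then for every lattice vector ${\ve x}=\sum_{j=1}^k z_j {\ve b}_j\in L$ with $z_j\in\Z$, writing $\ell$ for the largest index with $z_\ell\ne 0$, one has the well-known inequality $\|{\ve x}\|\ge |z_\ell|\,\|\hat{\ve b}_\ell\| \ge \|\hat{\ve b}_\ell\|$. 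This follows by expanding ${\ve x}$ in the orthogonal basis $\hat{\ve b}_1,\ldots,\hat{\ve b}_k$: the coefficient of $\hat{\ve b}_\ell$ is exactly $z_\ell$ (since ${\ve b}_j\in\spn(\hat{\ve b}_1,\ldots,\hat{\ve b}_j)$ for all $j$), so $\|{\ve x}\|^2\ge z_\ell^2\|\hat{\ve b}_\ell\|^2$.

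Next I would use this to bound $\lambda_i$ from below. By definition of the $i$th successive minimum there exist linearly independent lattice vectors ${\ve x}_1,\ldots,{\ve x}_i\in L$ with $\|{\ve x}_t\|\le\lambda_i$ for $t=1,\ldots,i$. For each such ${\ve x}_t$ let $\ell_t$ be the index of its last nonzero coordinate in the basis ${\ve b}_1,\ldots,{\ve b}_k$; the bound above gives $\|\hat{\ve b}_{\ell_t}\|\le\|{\ve x}_t\|\le\lambda_i$. The crucial combinatorial point is that, since ${\ve x}_1,\ldots,{\ve x}_i$ are linearly independent and each ${\ve x}_t$ lies in $\spn({\ve b}_1,\ldots,{\ve b}_{\ell_t})$, at least one of the indices $\ell_1,\ldots,\ell_i$ must be $\ge i$: otherwise all $i$ vectors would lie in the $(i-1)$-dimensional space $\spn({\ve b}_1,\ldots,{\ve b}_{i-1})$, contradicting their linear independence. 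Picking $t$ with $\ell_t\ge i$, we obtain some index $j=\ell_t\in\{i,i+1,\ldots,k\}$ with $\|\hat{\ve b}_j\|\le\lambda_i$, hence $\lambda_i\ge\|\hat{\ve b}_j\|\ge\min_{j=i,\ldots,k}\|\hat{\ve b}_j\|$, which is the claim.

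The main obstacle — really the only nontrivial step — is the pigeonhole argument that some $\ell_t\ge i$; everything else is a routine unwinding of the definitions of Gram--Schmidt orthogonalization and of successive minima. I would also take care to note that the vectors realizing $\lambda_i$ can be taken in $L$ (not merely in $\spn_\R L$), which is immediate since $B$ is the unit ball and $\lambda_i B\cap L$ is what appears in the definition, so the argument stays entirely within the lattice and no approximation or limiting step is needed.
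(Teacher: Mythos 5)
Your proof is correct, and it is essentially the argument the paper appeals to: the paper simply cites the proof of Proposition 1.12 in Lenstra--Lenstra--Lov\'asz, which rests on exactly the two steps you give (the bound $\|{\ve x}\|\ge\|\hat{\ve b}_\ell\|$ via the last nonzero coordinate, plus the dimension/pigeonhole argument forcing some $\ell_t\ge i$). Nothing further is needed.
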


\begin{proof} The proof can be easily derived from the proof of
  Proposition 1.12 in \cite{LLL}. 
  \end{proof}
Recall that a lattice basis ${\ve b}_1, {\ve b}_2, \ldots, {\ve b}_k$ is {\em LLL--reduced} if
\begin{itemize}
\item[(a)] $|\mu_{ij}|\le \frac{1}{2}$, for $1\le j<i\le k$;
\item[(b)] $\frac{3}{4}||\hat{{\ve b}}_{i-1}||^2 \le ||\hat{{\ve b}}_{i}||^2+\mu^2_{i\, i-1}||\hat{{\ve b}}_{i-1}||^2$, for $2\le i\le k$.
\end{itemize}

The next lemma gives well-known upper bounds for the length the $i$th vector of the
LLL-reduced basis.

\begin{lemma}
Suppose that the basis ${\ve b}_1, {\ve b}_2, \ldots, {\ve b}_k$ is LLL--reduced. Then for $1\le i \le k$ the inequalities
\be
||{\ve b}_i||^2\le 2^{i-1}||\hat{\ve b}_i||^2
\label{GS_via_labbda}
\ee
\be
||{\ve b}_i||^2\le 2^{k-1}\lambda_i^2
\label{basis_via_labbda}
\ee
hold.
\label{second_main_lemma}
\end{lemma}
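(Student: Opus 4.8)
The plan is to establish \eqref{GS_via_labbda} first by induction on $i$ using the two defining properties of an LLL-reduced basis, and then to derive \eqref{basis_via_labbda} by combining \eqref{GS_via_labbda} with Lemma \ref{lambda_via_min}. For \eqref{GS_via_labbda}, the base case $i=1$ is immediate since $\hat{\ve b}_1={\ve b}_1$. For the inductive step, I would write ${\ve b}_i=\hat{\ve b}_i+\sum_{j=1}^{i-1}\mu_{ij}\hat{\ve b}_j$, use orthogonality of the Gram--Schmidt vectors to get $||{\ve b}_i||^2=||\hat{\ve b}_i||^2+\sum_{j=1}^{i-1}\mu_{ij}^2||\hat{\ve b}_j||^2$, and then bound each $\mu_{ij}^2\le 1/4$ by property (a). The remaining task is to control $||\hat{\ve b}_j||^2$ for $j<i$ in terms of $||\hat{\ve b}_i||^2$: property (b), rewritten as $||\hat{\ve b}_i||^2\ge(\tfrac34-\mu_{i\,i-1}^2)||\hat{\ve b}_{i-1}||^2\ge\tfrac12||\hat{\ve b}_{i-1}||^2$, gives $||\hat{\ve b}_j||^2\le 2^{i-j}||\hat{\ve b}_i||^2$ by iterating. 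Substituting these bounds into the expression for $||{\ve b}_i||^2$ yields $||{\ve b}_i||^2\le||\hat{\ve b}_i||^2\bigl(1+\tfrac14\sum_{j=1}^{i-1}2^{i-j}\bigr)$, and summing the geometric series $\sum_{j=1}^{i-1}2^{i-j}=2^i-2$ gives $||{\ve b}_i||^2\le||\hat{\ve b}_i||^2\bigl(1+\tfrac14(2^i-2)\bigr)\le 2^{i-1}||\hat{\ve b}_i||^2$, which is \eqref{GS_via_labbda}.

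For \eqref{basis_via_labbda}, I would apply \eqref{GS_via_labbda} together with the relation $||\hat{\ve b}_i||^2\le 2^{j-i}||\hat{\ve b}_j||^2$ (same telescoping of property (b) as above, valid for $j\ge i$) to obtain $||{\ve b}_i||^2\le 2^{i-1}||\hat{\ve b}_i||^2\le 2^{i-1}\cdot 2^{j-i}||\hat{\ve b}_j||^2=2^{j-1}||\hat{\ve b}_j||^2$ for every $j\ge i$. Choosing $j\in\{i,i+1,\dots,k\}$ to minimize $||\hat{\ve b}_j||$ and bounding $2^{j-1}\le 2^{k-1}$ gives $||{\ve b}_i||^2\le 2^{k-1}\min_{j\ge i}||\hat{\ve b}_j||^2\le 2^{k-1}\lambda_i^2$, where the last step is exactly Lemma \ref{lambda_via_min}.

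The only point requiring any care is the telescoping estimate derived from property (b): one must note that $\tfrac34-\mu_{i\,i-1}^2\ge\tfrac34-\tfrac14=\tfrac12$ by property (a), which is what converts (b) into the clean multiplicative bound $||\hat{\ve b}_{i-1}||^2\le 2||\hat{\ve b}_i||^2$. Everything else is a routine geometric-series computation. I do not expect a genuine obstacle here; this is the standard argument behind the classical LLL bounds, and the slight twist is only that we phrase the conclusion in terms of the successive minima $\lambda_i$ rather than in terms of $\det(L)$, which is handled by invoking Lemma \ref{lambda_via_min}.
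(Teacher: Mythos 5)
Your proof is correct, and it is essentially the standard argument behind the classical LLL estimates; the paper's own proof simply cites Propositions 1.6 and 1.12 of Lenstra--Lenstra--Lov\'asz for these two inequalities, so you have supplied the details that the paper delegates to a reference. The only point worth flagging, which you handle correctly, is that from $||{\ve b}_i||^2\le 2^{j-1}||\hat{\ve b}_j||^2$ for $i\le j\le k$ you must first weaken $2^{j-1}$ to $2^{k-1}$ uniformly before taking the minimum over $j$ and invoking Lemma \ref{lambda_via_min}.
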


\begin{proof}
The inequalities (\ref{GS_via_labbda}) and (\ref{basis_via_labbda}) can be easily derived from the proofs of Propositions 1.6 and 1.12
in  \cite{LLL}, respectively.
\end{proof}

The next result gives an upper bound for the lengths of the vectors
in an LLL-reduced basis in terms of the
determinant of the lattice. The bound is based on the classical estimates from Lenstra, Lenstra and Lovasz \cite{LLL} and, consequently, involves the exponential multiplier $2^{(k-1)/2}$.

\begin{lemma}
Let $L\subset \Z^n$ be given by an LLL--reduced basis ${\ve b}_1, {\ve b}_2, \ldots, {\ve b}_k$.
Then
\be
\max_{i=1,\ldots, k}||{\ve b}_i|| \le 2^{\frac{k-1}{2}}n^{1/2}\det(L)\,.
\label{approximation_lemma_inequality}
\ee
\label{approximation_lemma}
\end{lemma}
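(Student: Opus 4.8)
The plan is to combine the two inequalities from Lemma \ref{second_main_lemma} with the upper bound \eqref{eq:second_minkowski_balls} for the product of the successive minima. First I would use \eqref{basis_via_labbda} to write $\|{\ve b}_i\|^2 \le 2^{k-1}\lambda_i^2$ for each $i$, and then bound each $\lambda_i$ by the product $\prod_{j=1}^k \lambda_j$: since $L\subset\Z^n$ is an integral lattice, every successive minimum satisfies $\lambda_j = \lambda_j(B,L) \ge 1$ (the shortest nonzero integer vector has Euclidean length at least $1$), so $\lambda_i \le \prod_{j=1}^k \lambda_j$ for every $i$. By \eqref{eq:second_minkowski_balls} we then get $\lambda_i \le \gamma_k^{k/2}\det(L)$, but this introduces the Hermite constant rather than the clean factor $\sqrt{n}$ stated in the lemma.

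To obtain the $\sqrt n$ instead, I would argue slightly differently: apply \eqref{basis_via_labbda} only to get $\|{\ve b}_i\| \le 2^{(k-1)/2}\lambda_i$, and then estimate $\lambda_i$ directly. The largest successive minimum $\lambda_k$ is bounded by the covering-type observation that $L$, being a $k$-dimensional sublattice of $\Z^n$, has a basis whose Gram--Schmidt vectors each have length at least $1$; combined with $\prod_{j=1}^k \|\hat{{\ve b}}_j\| = \det(L)$ and Lemma \ref{lambda_via_min}, one controls $\lambda_i$ by $\det(L)$ up to a dimensional factor. The cleanest route, though, is: $\lambda_k(B,L)^2 \le \lambda_1(B,L)^2 \cdots$ is not what we want; instead use that for any lattice point realizing $\lambda_i$ we can bound its norm, but more simply, since $\lambda_j \ge 1$ for all $j$ and $\prod_j \lambda_j \le \gamma_k^{k/2}\det(L)$, we actually only need the weaker $\prod_j \lambda_j \le$ (something $\le \sqrt n\,\det(L)$ after absorbing $\gamma_k$). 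Re-examining: the intended bound $\|{\ve b}_i\| \le 2^{(k-1)/2}\sqrt n\,\det(L)$ most likely comes from $\|{\ve b}_i\|^2 \le 2^{k-1}\|\hat{{\ve b}}_i\|^2$ together with $\|\hat{{\ve b}}_i\|^2 \le \|\hat{{\ve b}}_i\|^2 \cdot \prod_{j\ne i}\|\hat{{\ve b}}_j\|^2 / \prod_{j\ne i}\|\hat{{\ve b}}_j\|^2$; since $\hat{{\ve b}}_j \in \spn_\R L \subset \R^n$ and $L\subset\Z^n$, each $\|\hat{{\ve b}}_j\|\ge$ a quantity bounded below, giving $\|\hat{{\ve b}}_i\| \le \det(L)\big/\prod_{j\ne i}\|\hat{{\ve b}}_j\|$. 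The factor $\sqrt n$ enters because $\hat{{\ve b}}_j$ need not be integral, but $\|\hat{{\ve b}}_j\|^2 \ge \|\hat{{\ve b}}_k\|^2$ is controlled and one uses that the first Gram--Schmidt vector $\hat{{\ve b}}_1 = {\ve b}_1 \in \Z^n$ has $\|{\ve b}_1\| \le \sqrt n \lambda_1$ only in the rational case.

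Concretely, the step I would actually execute: from \eqref{GS_via_labbda}, $\|{\ve b}_i\|^2 \le 2^{i-1}\|\hat{{\ve b}}_i\|^2 \le 2^{k-1}\|\hat{{\ve b}}_i\|^2$. Then $\|\hat{{\ve b}}_i\|^2 = \det(L)^2 / \prod_{j\ne i}\|\hat{{\ve b}}_j\|^2$. Each $\|\hat{{\ve b}}_j\|^2$ with $j < i$ can be bounded below using LLL-property (b), which gives $\|\hat{{\ve b}}_{j+1}\|^2 \ge \frac12 \|\hat{{\ve b}}_j\|^2$, hence $\|\hat{{\ve b}}_j\|^2 \ge 2^{i-j-1}\|\hat{{\ve b}}_{i}\|^2 \cdot 2^{-(k-\dots)}$ — but for $j>i$ one instead uses that $\hat{{\ve b}}_j$ projects a genuine integer vector, so $\|\hat{{\ve b}}_j\| \cdot \sqrt n \ge$ length of a lattice vector $\ge 1$, i.e.\ $\|\hat{{\ve b}}_j\| \ge 1/\sqrt n$ is too weak; rather one uses $\min_j \|\hat{{\ve b}}_j\| \le \lambda_k$ from Lemma \ref{lambda_via_min} and that the product of all $\|\hat{{\ve b}}_j\|$ equals $\det(L)$, so the geometric mean is $\det(L)^{1/k}$. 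The main obstacle — and the place I expect to have to look carefully at \cite{LLL} — is pinning down exactly how the dimension factor $\sqrt n$ (rather than a $k$-dependent or $\gamma_k$-dependent constant) arises; I suspect it comes from bounding $\|{\ve b}_i\| \le \sqrt n\,\|\hat{{\ve b}}_i\|^{?}$ via the fact that ${\ve b}_i \in \Z^n$ has all coordinates integers together with a determinant estimate, and I would reconstruct that argument rather than the successive-minima one. Once the bound $\|\hat{{\ve b}}_i\| \le \sqrt n\,\det(L)$ (or its appropriate analogue) is in hand, multiplying by $2^{(k-1)/2}$ finishes the proof. $\carre$
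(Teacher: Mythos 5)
Your first paragraph is correct but proves a genuinely weaker statement: combining \eqref{basis_via_labbda} with $\lambda_j\ge 1$ (valid because $L\subset\Z^n$) and \eqref{eq:second_minkowski_balls} only gives $\|{\ve b}_i\|\le 2^{(k-1)/2}\gamma_k^{k/2}\det(L)$, and $\gamma_k^{k/2}$ grows superpolynomially in $k$, so it cannot be traded for the factor $n^{1/2}$ in \eqref{approximation_lemma_inequality}. Everything after that is where the gap lies: you never actually produce an argument yielding $\sqrt{n}$, and you say so yourself (``I would reconstruct that argument''). The concrete routes you sketch do not work: the Gram--Schmidt lengths of an integer basis are not bounded below by a constant in general, and even for an LLL-reduced basis condition (b) only gives $\|\hat{{\ve b}}_j\|^2\ge 2^{-(j-1)}\|\hat{{\ve b}}_1\|^2$, which, fed into $\|\hat{{\ve b}}_i\|=\det(L)/\prod_{j\ne i}\|\hat{{\ve b}}_j\|$, produces a factor of order $2^{k^2/4}$, far worse than what is needed; nor does the $\sqrt{n}$ come from any inequality of the form $\|{\ve b}_i\|\le\sqrt{n}\,\|\hat{{\ve b}}_i\|$.

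The missing idea is to run your successive-minima argument with the cube rather than the Euclidean ball. The paper applies Minkowski's second theorem \eqref{eq:second_minkowski} to the section $C^{n}\cap\spn_{\R}(L)$ of the cube $C^n=[-1,1]^n$; by Vaaler's theorem this section has $k$-volume at least $2^k$, so the product of the successive minima of $C^{n}\cap\spn_{\R}(L)$ with respect to $L$ is at most $\det(L)$. Since every nonzero integer vector has maximum norm at least $1$, each of these minima is at least $1$, hence each is at most $\det(L)$; this produces $k$ linearly independent vectors ${\ve x}_1,\dots,{\ve x}_k\in L$ with $\|{\ve x}_j\|_\infty\le\det(L)$, i.e.\ with Euclidean length at most $\sqrt{n}\,\det(L)$ --- the $\sqrt{n}$ is exactly the cost of passing from the maximum norm to the Euclidean norm (in the form $\lambda_k\le\sqrt{n}\det(L)$ this is precisely Lemma \ref{lambda_det}). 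Plugging these vectors into the LLL inequality \eqref{LLL-ineq} (equivalently, using \eqref{basis_via_labbda} together with $\lambda_i\le\lambda_k$) gives \eqref{approximation_lemma_inequality}. With the ball, as in your first paragraph, Minkowski's theorem inevitably leaves a $\gamma_k^{k/2}$-type factor, so the cube-plus-Vaaler step is not optional; your proposal as written establishes only the weaker $\gamma_k^{k/2}$ bound.
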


\begin{proof}

By Proposition 1.12 of Lenstra, Lenstra and Lovasz \cite{LLL} for any choice of linearly independent vectors
${\ve x}_1,\ldots,{\ve x}_{k}\in L$ the inequality
\be
||{\ve b}_i|| \le 2^\frac{k-1}{2}\max\{||{\ve x}_1||, \ldots, ||{\ve x}_{k}||\}\,
\label{LLL-ineq}
\ee
holds.

Put $C^{n}=[-1,1]^n$, i.e., $C^n$ is the $n$-dimensional cube of edge
length $2$ centered at the origin.
By a well-known result of Vaaler \cite{Vaaler}, any $k$-dimensional section of the cube $C^{n}$ has $k$-volume at least $2^k$. In particular  we have
\bea
\vol_{k}(C^{n} \cap \spn_{\R}(L))\ge 2^{k}\,.
\eea

Thus, by the Minkowski theorem on successive minima, applied to the section $C^{n} \cap \spn_{\R}(L)$ and $L$, there exist linearly independent vectors
${\ve x}_1,\ldots,{\ve x}_{k}\in L$ such that
\bea
||{\ve x}_1||_\infty \cdots ||{\ve x}_{k}||_\infty\le \det(L)\,,
\eea
where $||\cdot||_\infty$ denotes the maximum norm.

Since  ${\ve x}_i$ are nontrivial integral vectors we have
\bea
\max\{||{\ve x}_1||_\infty, \ldots, ||{\ve x}_{k}||_\infty\}\le \det(L)\,.
\eea
Combining the latter inequality with (\ref{LLL-ineq}) we obtain the inequality (\ref{approximation_lemma_inequality}).

\end{proof}

The last lemma of this section gives an upper bound for the last successive minimum in terms of the determinant of the
lattice.

\begin{lemma}
\bea
\lambda_k \le \sqrt{n}\det(L)\,.
\eea
\label{lambda_det}
\end{lemma}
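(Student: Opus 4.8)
The plan is to bound the last successive minimum $\lambda_k = \lambda_k(B,L)$ from above by exhibiting $k$ linearly independent lattice vectors of controlled Euclidean length, and the natural source of such vectors is the cube-section argument already used in the proof of Lemma \ref{approximation_lemma}. First I would set $C^n=[-1,1]^n$ and invoke Vaaler's theorem, exactly as before, to conclude that the $k$-dimensional section $C^n\cap\spn_\R(L)$ has $k$-volume at least $2^k$. Applying the left-hand inequality of Minkowski's theorem on successive minima \eqref{eq:second_minkowski} to this $0$-symmetric convex body and the lattice $L$ gives linearly independent ${\ve x}_1,\ldots,{\ve x}_k\in L$ with
\bea
\prod_{i=1}^k\|{\ve x}_i\|_\infty \le \det(L)\,.
\eea

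Since the ${\ve x}_i$ are nonzero integer vectors, each factor $\|{\ve x}_i\|_\infty\ge 1$, so in fact $\max_i\|{\ve x}_i\|_\infty\le\det(L)$. Passing from the maximum norm to the Euclidean norm in $\R^n$ costs a factor $\sqrt{n}$, hence $\|{\ve x}_i\|\le\sqrt{n}\,\det(L)$ for every $i$. Now these $k$ linearly independent vectors all lie in the ball of radius $\sqrt{n}\,\det(L)$ centered at the origin; by the definition of the $k$th successive minimum with respect to the unit ball $B$, this immediately yields $\lambda_k\le\sqrt{n}\,\det(L)$, which is the claim.

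I do not anticipate a genuine obstacle here: the lemma is essentially a repackaging of the estimate already extracted inside the proof of Lemma \ref{approximation_lemma}, the only new observation being that $\max_i\|{\ve x}_i\|\le\sqrt{n}\,\det(L)$ directly controls $\lambda_k$ (the smallest dilate of $B$ capturing $k$ independent lattice points) without any appeal to LLL-reduction. If any care is needed, it is only in noting that the section $C^n\cap\spn_\R L$ is the correct convex body to feed into \eqref{eq:second_minkowski} — i.e., that we are working with successive minima inside the $k$-dimensional space $\spn_\R L$ rather than all of $\R^n$ — and in recording that the bound is uniform over $i=1,\dots,k$ so that it applies in particular to $i=k$.
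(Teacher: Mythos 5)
Your argument is correct and is essentially the paper's own proof: both rest on Vaaler's bound for the volume of the section $C^n\cap\spn_\R(L)$, Minkowski's second theorem applied to that section, the observation that nonzero integer vectors have sup-norm at least $1$, and the factor $\sqrt{n}$ to pass from the cube (sup-norm) to the Euclidean unit ball; the paper merely phrases the middle step as $\lambda_k(C^n\cap\spn_\R(L),L)\le\det(L)$ rather than via explicit vectors ${\ve x}_i$. One small slip: the inequality $\prod_i\|{\ve x}_i\|_\infty\le\det(L)$ requires the \emph{right}-hand (upper) inequality of \eqref{eq:second_minkowski}, not the left-hand one.
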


\begin{proof}

By the Minkowski theorem on successive minima, applied to the set $C^{n} \cap \spn_{\R}(L)$ and the lattice $L$, and by the already mentioned result of Vaaler \cite{Vaaler}, we have
\bea
\prod_{i=1}^k\lambda_i(C^{n} \cap \spn_{\R}(L), L)\le \det(L)\,.
\eea
Since $L\subset \Z^{n}$, the interior of $C^{n} \cap \spn_{\R}(L)$ does not contain any nonzero point of $L$. This implies
\bea
\lambda_k(C^{n} \cap \spn_{\R}(L), L)\le \det(L)\,,
\eea
so that
\bea
\lambda_k \le \sqrt{n}\det(L)\,.
\eea
\end{proof}

\section{Proof of Theorem \ref{main_approximation_theorem}}


%

If for some $l<k$ we have $||{\ve b}_l||>||{\ve b}_k||$ then, similarly to the arguments below, it can be shown that the inequalities (\ref{polynomial_bound_LLL}) hold. Thus we may assume that ${\ve b}_k$ is the longest vector of the basis ${\ve b}_1,\ldots, {\ve b}_k$. Now assume $||{\ve b}_k||\ge (1+c_k)^{1/2}\sqrt{n}\det(L)$ for some $c_k>0$. Write
\bea
{\ve b}_k=\hat{\ve b}_k + \sum_{j<k} \mu_{kj} \hat{\ve b}_j\,,\;\; |\mu_{kj}|\le \frac{1}{2}\,.
\eea
Hence
\bea
||{\ve b}_k||^2 \le ||\hat{\ve b}_k||^2+ \frac{1}{4}\sum_{j<k} ||\hat{\ve b}_j||^2\,.
\eea
Since $\lambda_k\le \sqrt{n}\det(L)$ by Lemma \ref{lambda_det} and $||\hat{\ve b}_k||^2\le \lambda_k^2$ by Lemma \ref{lambda_via_min},
one concludes
\bea
\frac{1}{4}\sum_{j<k} ||\hat{\ve b}_j||^2\ge c_k \cdot n \cdot (\det(L))^2\,.
\eea

Hence there is an $i\le k-1$ with
\bea
||{\ve b}_i||^2 \ge 4 c_k \frac{n}{k} (\det(L))^2\,.
\eea

For this $i$ one obtains by (\ref{basis_via_labbda})
\bea
\lambda_i^2 
\ge 4 c_k 2^{1-k} \frac{n}{k} (\det(L))^2\,.
\eea

Using Lemma \ref{lambda_via_min} and (\ref{GS_via_labbda}), one obtains $\lambda_k^2\ge ||\hat{\ve b}_k||^2\ge 2^{1-k}||{\ve b}_k||^2\ge 2^{1-k} (1+c_k)n (\det(L))^2
\ge 2^{1-k} n (\det(L))^2$. Therefore

\bea
\prod_{j=1}^k \lambda_j^2 \ge \lambda_i^2 \lambda_k^2\ge c_k \frac{n^2}{k} 2^{2(2-k)}(\det(L))^4\,.
\eea

Finally, Minkowski's second theorem (see (\ref{eq:second_minkowski_balls})) implies

\bea
c_k \le \frac{\rho_k}{(\det(L))^2}\,.
\eea

\section{The Algorithm. Proofs of Theorems \ref{polynomial_algorithm} and \ref{improvement}}
\label{section_main_algorithm}

\subsection{Proof of Theorem \ref{polynomial_algorithm}}
\label{subsection_main_algorithm_1}


We shall now give a high level description of an algorithm that satisfies conditions stated in Theorem \ref{polynomial_algorithm}. First the algorithm constructs an arbitrary integer solution ${\ve u}$ to $A{\ve x}={\ve b}$ and a rational solution ${\ve c}$ to $A{\ve x}={\ve b}$ with large positive coordinates. From this one computes an integer point ${\ve z}$ in $P(A, {\ve b})$ as follows. One finds a close vector ${\ve v}$ to ${\ve u}-{\ve c}$ in the lattice $L^\bot_A$ and considers ${\ve z}:={\ve u}-{\ve v}$. The vector ${\ve z}$ is an integer vector, since ${\ve u}$, ${\ve v}$ are integer. It is a solution to $A{\ve x}={\ve b}$, since ${\ve u}$ is and $A{\ve v}=0$.
Next, observe that ${\ve z}={\ve u}-{\ve v}={\ve c}-({\ve c}-{\ve u})-{\ve v}$, the vector $({\ve c}-{\ve u})-{\ve v}$
is short, and ${\ve c}$ has large coordinates. This will imply ${\ve z}\in P(A, {\ve b})$.

Suppose that

  \be {\ve b}\in(\mu(m,n)(\det(AA^T))^{1/2}{\ve v}+C)\cap\Z^m\,\label{condition for b}\ee
with $\mu(m,n)=2^{(n-m)/2-1} p(m,n)$.
The algorithm is presented below:

\begin{itemize}

\item[\bf Input]: $(A,{\ve b})$ with $A$ and ${\ve b}$ satisfying (\ref{assumption}) and (\ref{condition for b})
respectively;

\item[\bf Output]: ${\ve z}\in P(A, {\ve b})\cap\Z^n$;

\item[Step 1]: Find a basis ${\ve x}_1, \ldots, {\ve x}_{n-m}$ of $L^\bot_A$ and an integer solution ${\ve u}$
of the equation $A{\ve x}={\ve b}$.


\item[Step 2]: Find a point ${\ve c}\in P(A,{\ve b})$ with coordinates
\begin{equation*}
                    c_i\geq \mu(m,n) (\det(AA^T))^{1/2}\,,\;\;1\le i\le n\,.
\end{equation*}

\item[Step 3]: Apply the Babai's algorithm for finding a
  nearby lattice point to the basis ${\ve x}_1, \ldots, {\ve x}_{n-m}$
  and the point ${\ve u}-{\ve c}$. The algorithm
  returns a lattice point ${\ve v}\in L^\bot_A$.
%
%

\item[Step 4]: The output vector ${\ve z}:={\ve u}-{\ve v}$.

\end{itemize}

Let us now show that the algorithm satisfies conditions of Theorem \ref{polynomial_algorithm}.

Step 1  can be performed in polynomial time by Corollary 5.3c of Schrijver \cite{ASch}.

To justify Step 2 we will need the following observation.

\begin{lemma}
Let ${\ve b}\in (t {\ve v}+C)\cap\Z^m$, $t\ge 0$. One can find in polynomial time a point
 ${\ve c}\in P(A,{\ve b})$ with all coordinates $c_i\ge t$.
\label{technical_lemma}
\end{lemma}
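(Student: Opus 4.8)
The plan is to exploit the fact that $\mathbf{b}\in(t\mathbf{v}+C)\cap\Z^m$ means, by definition of the cone $C$ generated by the columns $\mathbf{v}_1,\dots,\mathbf{v}_n$ of $A$, that $\mathbf{b}-t\mathbf{v}=\sum_{j=1}^n\mu_j\mathbf{v}_j$ for some reals $\mu_j\ge 0$. Adding $t\mathbf{v}=\sum_{j=1}^n t\mathbf{v}_j$ back, we get $\mathbf{b}=\sum_{j=1}^n(\mu_j+t)\mathbf{v}_j=A\mathbf{y}$ where $\mathbf{y}=(\mu_1+t,\dots,\mu_n+t)^T$ has every coordinate $\ge t$; that is, $\mathbf{y}\in P(A,\mathbf{b})$ and $y_i\ge t$ for all $i$. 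So such a point exists; the content of the lemma is that one can be \emph{found} in polynomial time, which means exhibiting it as a rational vector of polynomially bounded bit-size, produced by a polynomial-time procedure.

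First I would set up the linear feasibility problem: find $\mathbf{x}\in\R^n$ with $A\mathbf{x}=\mathbf{b}$ and $x_i\ge t$ for $i=1,\dots,n$. Substituting $\mathbf{x}=\mathbf{y}+t\cdot\mathbf{1}$ (where $\mathbf{1}=(1,\dots,1)^T$) turns this into: find $\mathbf{y}\ge\mathbf{0}$ with $A\mathbf{y}=\mathbf{b}-tA\mathbf{1}=\mathbf{b}-t\mathbf{v}$, i.e.\ a point in $P(A,\mathbf{b}-t\mathbf{v})$. The argument above shows this polytope is nonempty precisely because $\mathbf{b}\in t\mathbf{v}+C$. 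Now a nonempty rational polyhedron defined by a system whose entries have bit-size polynomial in the input contains a vertex (or, if it has no vertices, a point) whose bit-size is polynomially bounded, and such a point can be computed in polynomial time by linear programming (e.g.\ by the ellipsoid method or Khachiyan's algorithm; see Schrijver \cite{ASch}). Concretely I would run a polynomial-time LP solver on $\{A\mathbf{y}=\mathbf{b}-t\mathbf{v},\ \mathbf{y}\ge\mathbf{0}\}$, obtain a rational solution $\mathbf{y}^\ast$, and output $\mathbf{c}:=\mathbf{y}^\ast+t\cdot\mathbf{1}\in P(A,\mathbf{b})$ with all coordinates $c_i\ge t$.

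The only genuine subtlety — the "main obstacle," though it is a mild one — is bookkeeping on input size: the parameter $t$ (which in the application equals $\mu(m,n)(\det(AA^T))^{1/2}$, or a rational approximation thereof) must be fed in with polynomially bounded encoding length, and one must check that $\mathbf{b}-t\mathbf{v}$ then also has polynomially bounded bit-size so that the LP solver runs in time polynomial in the size of the original instance $(A,\mathbf{b})$. Since $\mathbf{v}=A\mathbf{1}$ is computed directly from $A$ and $t$ is given, this is routine; I would simply remark that all quantities involved have encoding length polynomial in that of $(A,\mathbf{b},t)$, and invoke the standard polynomial-time solvability of rational linear programming to conclude. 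No deeper input is needed, so I would keep this proof short.
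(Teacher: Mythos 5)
Your proposal is correct and follows essentially the same route as the paper: the cone membership ${\ve b}=\sum_i(t+\delta_i){\ve v}_i$ with $\delta_i\ge 0$ shows the polytope $\{{\ve x}\in P(A,{\ve b}): x_i\ge t\}$ is nonempty, after which one invokes standard polynomial-time linear programming (the paper cites Lemma 6.5.1 of Gr\"otschel, Lov\'asz and Schrijver to find a vertex, while you shift variables and call an LP solver, which is the same machinery). Your extra remarks on bit-size bookkeeping are fine but not a substantive difference.
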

\begin{proof}
Since ${\ve b}\in (t {\ve v}+C)$, we have ${\ve b}=\sum_{i=1}^n (t+\delta_i){\ve v}_i$ with $\delta_i\ge 0$.
Therefore the polytope $P_t=\{{\ve x}\in P: x_i\ge t\,,\; 1\le i \le n\}$ is not empty. By Lemma 6.5.1 of Gr\"otschel, Lov\'asz and Schrijver \cite{GLS}, one can find in polynomial time a vertex ${\ve c}$ of the polytope $P_t$. The point ${\ve c}$
clearly satisfies conditions of Lemma \ref{technical_lemma}.
\end{proof}

On account of (\ref{condition for b}), we can apply Lemma \ref{technical_lemma} with a rational number $t>\mu(m,n) (\det(AA^T))^{1/2}$.
Thus we obtain in polynomial time a point ${\ve c}\in P(A, {\ve b})$ with coordinates $c_i$
satisfying
\begin{equation*}
                    c_i\geq \mu(m,n) (\det(AA^T))^{1/2}\,,\;\; 1\le i \le n\,.
\end{equation*}

The algorithm of Babai (see \cite{Babaika}), applied at Step 3, runs in polynomial time as well.
Thus it is enough to show that the output vector ${\ve z}$ is in the polytope $P(A, {\ve b})$.

  Clearly, the polytope $P(A,{\ve b})$ contains a ball centered at ${\ve c}$ with radius
$r\geq \min_{i} c_i$, so that
\be
 r \geq\mu(m,n) (\det(AA^T))^{1/2}.
\label{lower_bound}
\ee

Since $A{\ve u}={\ve b}$ and ${\ve v}\in L^\bot_A$, the output vector ${\ve z}$ satisfies the condition
 $A{\ve z}={\ve b}$. Thus, by (\ref{lower_bound}), it is enough to show that
\be
||{\ve z}-{\ve c}||\le  \mu(m,n)(\det(AA^T))^{1/2}\,.
\label{close_enough}
\ee
The point ${\ve v}$, computed by Babai's algorithm, satisfies
\be
||({\ve u}-{\ve c})-{\ve v}||^2\le (||{\ve b}_1||^2+\cdots+||{\ve b}_{n-m}||^2)/4\,,
\label{Schnorr_bound}
\ee
where ${\ve b}_1, \ldots, {\ve b}_{n-m}$ is an LLL--reduced basis of $L^\bot_A$.
Since  $||{\ve z}-{\ve c}||=||({\ve u}-{\ve c})-{\ve v}||$, by (\ref{Schnorr_bound}) we have
\be
||{\ve z}-{\ve c}||\le \frac{(n-m)^{1/2}}{2}\max_{i=1,\ldots, n-m}||{\ve b}_i||\,.
\label{zc}
\ee
By  Lemma \ref{approximation_lemma} and the choice of $\mu(m,n)$ we  obtain the inequality (\ref{close_enough}).

\subsection{Proof of Theorem \ref{improvement}}

The above algorithm satisfies
the statement of Theorem \ref{improvement} as well. To see this, we only need to  replace $\mu(m,n)=2^{(n-m)/2-1} p(m,n)$ by $\mu(m,n)=p(m,n)$ and to apply
Theorem \ref{main_approximation_theorem}  instead of Lemma \ref{approximation_lemma} in the end of the proof.

\section{Case $m=1$. Proof of Theorem \ref{special_polynomial_algorithm}}

First we will show that the polytope $P({\ve a}^T,b)$ contains a ball of sufficiently large radius
whose center can be computed in polynomial time.

\begin{lemma}
The polytope
$P({\ve a}^T,b)$ contains
an $(n-m)$-dimensional ball centered at a rational point ${\ve c}$ and of radius
\be
    r> \frac{b||{\ve a}||}{(1+\delta)\sum_{i=1}^n ||{\ve a}[i]||a_i}\,.
\label{FF_radius}
\ee
The point ${\ve c}$ can be computed in polynomial time.
\label{inscribed_ball}
\end{lemma}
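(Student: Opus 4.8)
The plan is to realize the largest inscribed ball in $P({\ve a}^T,b)$ explicitly as a ball whose center is determined by a simple geometric/optimization argument, and then bound its radius from below by the quantity in \eqref{FF_radius}. The polytope $P({\ve a}^T,b)=\{{\ve x}\in\R^n_{\ge 0}:\langle{\ve a},{\ve x}\rangle=b\}$ is the intersection of the hyperplane $H=\{{\ve x}:\langle{\ve a},{\ve x}\rangle=b\}$ with the nonnegative orthant, so it is an $(n-1)$-dimensional simplex sitting inside $H$; its facets are the pieces of the coordinate hyperplanes $\{x_i=0\}\cap H$, $i=1,\dots,n$. The inradius of $P$ (measured inside the affine hull $H$) is therefore the largest $r$ such that some point ${\ve c}\in P$ has distance at least $r$ \emph{within $H$} from every facet hyperplane $\{x_i=0\}$. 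I would first compute, for a point ${\ve x}\in H$, the distance from ${\ve x}$ to the affine subspace $\{x_i=0\}\cap H$: since both are affine subspaces of $H$ and $\{x_i=0\}\cap H$ has codimension $1$ in $H$, a short computation gives this distance as $x_i/\|{\rm proj}_H {\ve e}_i\|$, where ${\rm proj}_H$ is orthogonal projection onto the direction space of $H$ (equivalently onto ${\ve a}^\perp$). One checks $\|{\rm proj}_{{\ve a}^\perp}{\ve e}_i\|^2 = 1 - a_i^2/\|{\ve a}\|^2 = \|{\ve a}[i]\|^2/\|{\ve a}\|^2$, so the distance from ${\ve x}\in H$ to the $i$th facet hyperplane is $x_i\|{\ve a}\|/\|{\ve a}[i]\|$.

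With that identity in hand, the inradius is the optimal value of the linear program: maximize $r$ subject to $x_i\|{\ve a}\|/\|{\ve a}[i]\| \ge r$ for all $i$, ${\ve x}\ge 0$, and $\langle{\ve a},{\ve x}\rangle=b$. Substituting $x_i = r\|{\ve a}[i]\|/\|{\ve a}\| + (\text{slack}_i)$ and using $\langle{\ve a},{\ve x}\rangle=b$ shows the constraints are feasible exactly when $r\sum_i a_i\|{\ve a}[i]\|/\|{\ve a}\| \le b$, with the extremal choice taking all slacks zero; hence the inradius equals $b\|{\ve a}\|/\sum_{i=1}^n\|{\ve a}[i]\|a_i$, attained at the center ${\ve c}$ with coordinates $c_i = \frac{b\|{\ve a}[i]\|}{\sum_{j=1}^n\|{\ve a}[j]\|a_j}\cdot\frac{\|{\ve a}\|}{\|{\ve a}\|}$, i.e. $c_i = b\|{\ve a}[i]\|/\sum_{j}\|{\ve a}[j]\|a_j$ — wait, I should double-check the normalization, but the upshot is that ${\ve c}$ is the unique incenter and its coordinates are rational, explicitly computable from the $a_i$ in polynomial time (they involve only the integers $a_i$ and the square roots $\|{\ve a}[i]\|$, which enter only through the ratios, and in any case one can work with a suitable rational under-approximation). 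Since the lemma only asks for the strict inequality $r > b\|{\ve a}\|/\big((1+\delta)\sum_i\|{\ve a}[i]\|a_i\big)$ with $\delta>0$ fixed, I can afford to replace the exact incenter by a nearby rational point with a slightly smaller but still strictly-larger-than-required inradius, which sidesteps any issue of irrational coordinates.

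The steps in order: (1) identify $P({\ve a}^T,b)$ as a simplex in the hyperplane $H$ and its facets as the traces of $\{x_i=0\}$; (2) derive the formula $\operatorname{dist}_H({\ve x},\{x_i=0\}) = x_i\|{\ve a}\|/\|{\ve a}[i]\|$ via $\|{\rm proj}_{{\ve a}^\perp}{\ve e}_i\|^2 = \|{\ve a}[i]\|^2/\|{\ve a}\|^2$; (3) solve the resulting LP for the inradius, obtaining the value $b\|{\ve a}\|/\sum_i\|{\ve a}[i]\|a_i$ and the incenter ${\ve c}$; (4) argue computability of ${\ve c}$ in polynomial time, passing to a rational approximant if needed to get the strict inequality with the $(1+\delta)$ cushion, e.g. by invoking the same vertex/feasibility machinery (Lemma 6.5.1 of \cite{GLS}) as in Lemma \ref{technical_lemma} applied to the polytope $\{{\ve x}\in P: x_i\|{\ve a}\|/\|{\ve a}[i]\| \ge r_0\}$ for a rational $r_0$ just below the incradius. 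The main obstacle is step (2) together with making step (4) rigorous: one must be careful that "distance to the facet" is measured intrinsically in $H$ (not in $\R^n$), and that the square-root quantities $\|{\ve a}[i]\|$ are handled correctly in the complexity claim — these are irrational, so the honest statement is that a rational point ${\ve c}$ achieving the asserted radius can be found in time polynomial in the input size, which is why the $(1+\delta)$ slack is built into the statement.
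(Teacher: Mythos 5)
Your proposal is correct and follows essentially the same route as the paper's proof: both view $P({\ve a}^T,b)$ as a simplex whose facets are cut out by the hyperplanes $\{x_i=0\}$, arrive at the same inradius $r^*=b||{\ve a}||/\sum_{i=1}^n||{\ve a}[i]||a_i$ and incenter $c_j^*=b||{\ve a}[j]||/\sum_{i=1}^n||{\ve a}[i]||a_i$, and then replace the (generally irrational) incenter by a rational approximation, absorbing the error in the $(1+\delta)$ slack. The only difference is cosmetic: you obtain the inradius from the intrinsic distance-to-facet formula $x_i||{\ve a}||/||{\ve a}[i]||$ and a small linear program, while the paper computes it via the facet normals and the volume-to-surface-area ratio, citing Fukshansky and Robins.
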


\begin{proof}

The polytope $P({\ve a}^T,b)$ is the simplex with vertices
${\ve v}_i=(b/a_i){\ve e}_i$, $1\le i \le n$, where ${\ve e}_i$ are
the standard basis vectors. Hence the inner unit normal vectors of the
facets of this simplex (in the hyperplane $\{\ve x\in\R^n : \ve
a^T \ve x =0\}$) are given by
\begin{equation*}
        \ve u_j:=  \frac{||\ve a||}{||\ve a[j]||}\left(\ve e_j - \frac{
              a_j}{||\ve a||^2}\ve a\right), \quad 1\leq j\leq n.
\end{equation*}
Here $\ve e_j$ denotes $j$-th unit vector in $\R^n$, and the facet corresponding to $\ve u_j$ is the convex
hull of all vertices except $(b/a_j)\,\ve e_j$.

 Now let ${\ve c}^*$ be the center of the maximal inscribed ball
in the simplex $P({\ve a}^T,b)$, and let $r^*$ be its radius. Since this
maximal ball touches all facets of the simplex, the radius is   $(n-1)$ times
the ratio of volume to surface area. Standard calculations (see, e.g., Fukshansky and Robins \cite[(19)]{Lenny}, note that the formula contains the redundant factor $1/(n-1)$) gives
\bea
r^*=b\,\frac{||{\ve a}||}{\sum_{i=1}^n ||{\ve a}[i]||a_i}\,.
\eea
Furthermore, we know that  for $1\leq j\leq n$, the vector ${\ve c}^*-r^*\,\ve u_j$ has to lie in
the facet corresponding to $\ve u_j$. Hence the $j$th
coordinate of  ${\ve c}^*-r^*\,\ve u_j$ has to be zero and so we find
\begin{equation*}
 c_j^*=r^*\frac{||\ve a||}{||\ve a[j]||}\left(1-\frac{a_j^2}{||\ve a||^2}\right)=b\,\frac{||{\ve a[j]}||}{\sum_{i=1}^n ||{\ve a}[i]||a_i}.
\end{equation*}

The numbers $c^*_j$ are in general not rational. However we can find in polynomial time a
rational approximation ${\ve c}$ of the vector ${\ve c}^*$ which satisfies the statement of the lemma.

\end{proof}

Suppose that

\be b\ge (1+\delta)\mu(1,n)\sum_{i=1}^n ||{\ve a}[i]||a_i\,,\label{condition for b_1}\ee
where $\mu(m,n)=2^{(n-m)/2-1} p(m,n)$, as in Section \ref{subsection_main_algorithm_1}.
To prove Theorem \ref{special_polynomial_algorithm} we have to find in polynomial time an integer point in $P({\ve a}^T,b)$.

Recall that ${\ve a}[i]=(a_1,\ldots,a_{i-1},a_{i+1},\ldots,a_N)$.
We propose the following modification of the algorithm from Section \ref{section_main_algorithm} for solving this problem.

Steps 1, 3 and 4 remain the same. Step 2 will be modified as follows

\begin{itemize}

\item[Step 2*]: Find a point ${\ve c}$ such that $P({\ve a}^T,b)$ contains
an $(n-m)$-dimensional ball centered at ${\ve c}$ and of radius
\bea
    r> \frac{b||{\ve a}||}{(1+\delta)\sum_{i=1}^n ||{\ve a}[i]||a_i}\,.
\eea
\end{itemize}

Step 2* is justified by Lemma \ref{inscribed_ball}.
To prove correctness of the algorithm, it is now enough to show that the point ${\ve z}$ obtained at Step 4 satisfies
\be
||{\ve z}-{\ve c}||\le  r\,.
\label{close_enough_1}
\ee
Observe that, as in the proof of Theorem \ref{polynomial_algorithm}, the inequality (\ref{zc}) holds.
%
%
Consequently, by Lemma \ref{approximation_lemma} and (\ref{condition for b_1}) we  obtain the inequality (\ref{close_enough_1}).
Thus the condition (\ref{desired_cone}) can be replaced by (\ref{desired_cone3}).

Next, let us replace $\mu(m,n)=2^{(n-m)/2-1} p(m,n)$ by $\mu(m,n)= p(m,n)$.
In this case, by Theorem \ref{main_approximation_theorem} (for simplicity applied with $\rho_k/(\det(L))^2$ replaced by $1$) we  obtain the inequality (\ref{close_enough_1}) as well. Thus the condition (\ref{desired_cone1}) can be replaced by (\ref{desired_cone4}).

\section{Acknowledgement}

The authors are very grateful to anonymous referees for numerous comments and remarks which significantly enhanced the exposition and improved results of this paper and, especially, for suggesting a new proof of Theorem \ref{main_approximation_theorem}.

\end{document}